\tikzset{
	mid arrow/.style={
		postaction={
			decorate,
			decoration={
				markings,
				mark=at position 0.8 with {
					\arrow{Latex[length=4mm, width=3mm]}
				}
			}
		}
	}
}
\newtheorem{theorem}{Theorem}
\newtheorem{cor}[theorem]{Corollary}
\newtheorem{obs}[theorem]{Observation}
\newtheorem{defi}[theorem]{Definition}
\newtheorem{prop}[theorem]{Proposition}
\title{On Link-irregular labelings of Graphs} 
\author{
	Alexander Bastien
	\qquad
	Omid Khormali\thanks{Corresponding author. Email: \texttt{ok16@evansville.edu}} \\
	\small Department of Mathematics\\[-0.8ex]
	\small University of Evansville\\[-0.8ex]
	\small Evansville, Indiana 47722, USA.\\
	\small \texttt{ab995@evansville.edu}\\
	\small \texttt{ok16@evansville.edu}
}
\begin{document}

	\maketitle
	
	\begin{abstract}
		We introduce the concept of link-irregular labelings for graphs, extending the notion of link-irregular graphs through edge labeling with positive integers. A labeling is link-irregular if every vertex has a uniquely labeled subgraph induced by its neighbors. We establish necessary and sufficient conditions for the existence of such labelings and define the link-irregular labeling number $\eta(G)$ as the minimum number of distinct labels required. Our main results include necessary and sufficient conditions for the existence of link-irregular labelings. We show that certain families of graphs, such as bipartite graphs, trees, cycles, hypercubes, and complete multipartite graphs, do not admit link-irregular labelings, while complete graphs and wheel graphs do. Specifically, we prove that $\eta(K_n) = 2$ for $n \geq 6$ and $\eta(K_n) = 3$ for $n \in \{3,4,5\}$. For wheel graphs $W_n$, we establish that $\eta(W_n) \approx \sqrt{2n}$ asymptotically. Finally, we prove that for every positive integer $n$, there exists a graph with a link-irregular labeling number exactly $n$, and provide several results on graph operations that preserve labeling numbers.
		
	\end{abstract}
	
	\medskip
	\noindent\textbf{Keywords:} Link-irregular graphs, Edge labeling, Graph irregularity, Cut-irregular graphs.

	\section{Introduction}
	
	Let $G$ be a graph with vertex set $V(G)$ and edge set $E(G)$. We follow standard terminology and notation in graph theory as presented in West's textbook \cite{west}. The number of vertices and edges in a graph $G$ are denoted by $n(G)$ and $e(G)$, respectively. The number of edges connected to vertex $u \in V(G)$ is the degree of a vertex $u$, denoted $d(u)$. And the set of degrees of the vertices of graph $G$ is denoted by $D(G)$.
	The join $G \lor H$ is the graph formed by taking both graphs and adding all edges between vertices of $G$ and vertices of $H$.
	We use $G + H$ to denote the disjoint union of $G$ and $H$, meaning the graph consisting of all vertices and edges of $G$ and $H$ with no additional edges and no shared vertices. A graph is irregular if no two vertices in the graph have the same degree. It is known that no such graph exists because there is no simple graph in which all vertex degrees are distinct. Ali, Chartrand, and Zhang~\cite{akbar_book} introduced the notions of the \emph{link} of a vertex, \emph{link-regular} graphs, and \emph{link-irregular} graphs. The \emph{link} $L(v)$ of a vertex $v$ in a graph $G$ is defined as the subgraph induced by the neighborhood of $v$; that is, $L(v) = G[N(v)]$. A graph $G$ is called \emph{link-regular} if all of its vertices have isomorphic links—that is, $L(u) \cong L(v)$ for all $u, v \in V(G)$. Conversely, $G$ is \emph{link-irregular} if every pair of distinct vertices has non-isomorphic links; that is, $L(u) \ncong L(v)$ for all distinct $u, v \in V(G)$. One of their results is that there exists a link-irregular graph of order $n$ if and only if $n\geq 6$. The authors provided the unique link-irregular graph with 6 vertices in \cite{akbar_paper}, and it is
	
	\begin{figure}[htbp]
		\centering
		\includegraphics[scale = .3]{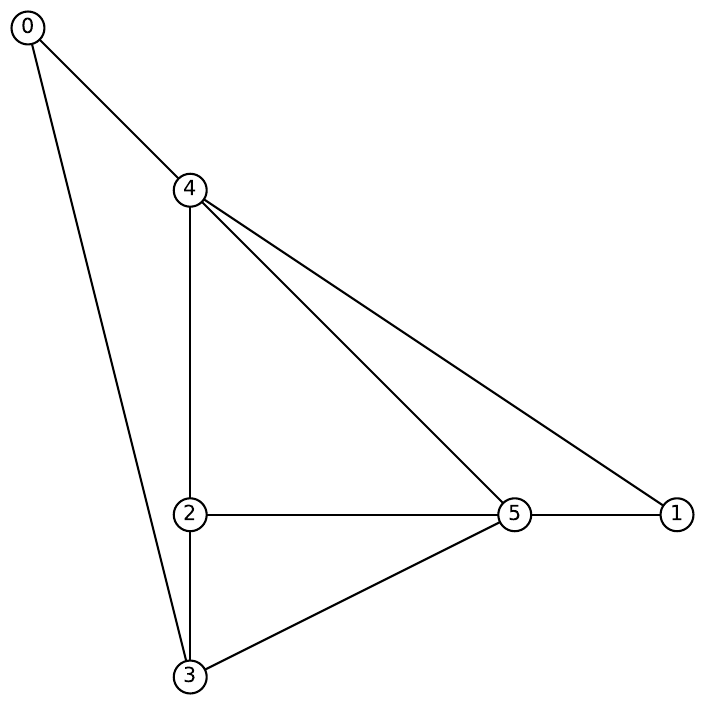}
		\caption{The unique link-irregular graph with 6 vertices \cite{akbar_paper}.}
		\label{fig:unique-6}
	\end{figure}
	
	Recently, Bastien and Khormali in \cite{alexander} revealed some surprising results: there are no bipartite or regular link-irregular graphs for small orders, and only finitely many link-irregular graphs are planar. Also, they disprove a conjecture that no regular link-irregular graphs exist, through explicit and probabilistic constructions. Link-irregularity is defined for simple graphs, and a natural generalization arises by extending the definition to loopless multigraphs where multiple edges are allowed between any pair of vertices. Instead of working directly with loppless multigraphs, we consider the edge-labeling of underlying simple graphs, where each edge is assigned a positive integer label indicating its multiplicity. This labeling helps us to work with simple graphs while still having the complexity of loopless multigraphs. So, for a simple graph in which each edge is labeled with a positive integer, we introduce a labeled link-irregular graph so that each vertex has a uniquely labeled subgraph induced by its neighbors. By this extension, we are able to study the structural uniqueness in cases where the simple graph alone is not link-irregular, but edge labeling can create link distinctions. Therefore, throughout the remainder of this paper, we consider only simple graphs that have no loops or multiple edges, and the labeling of edges is given by positive integers. \\
	
	
	Let $G$ be a graph and let $l: E(G) \to \mathbb{Z}^+$ be an edge-labeling of $G$, assigning a positive integer to each edge. For a vertex $v \in V(G)$, the \emph{labeled link} of $v$, denoted $L_l(v)$, is the induced subgraph of $G$ on the neighborhood $N(v)$, with edge labels coming from labeling $l$.
	
	We say that the labeling $l$ is \emph{link-irregular} if for all distinct vertices $u, v \in V(G)$, the labeled links are pairwise non-isomorphic:
	\[
	L_l(u) \ncong L_l(v) \quad \text{for all } u \ne v.
	\]
	
	A graph $G$ together with a link-irregular labeling $l$ is called a \emph{labeled link-irregular graph}.\\
	
	
	In the following, we investigate the structural properties of the labeled link-irregular graphs.
	
	\section{Results}
	
	The following result mentions the relation between the link and the labeled link of the vertices in a graph.

	\begin{theorem}\label{thm1}
		Let $ G$ be a graph. Then $G$ has a link-irregular labeling if and only if given any distinct $ x, y \in V(G) $, either $ L(x) \not\cong L(y) $, or  $ E(L(x)) \neq E(L(y)) $.
	\end{theorem}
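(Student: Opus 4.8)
The plan is to split into the two directions after first isolating the only nontrivial case. The opening observation is that a label-preserving isomorphism of labeled links is, in particular, an isomorphism of the underlying unlabeled links; hence whenever $L(x) \not\cong L(y)$ we automatically get $L_l(x) \ncong L_l(y)$ for \emph{every} labeling $l$. Consequently the whole content of the statement concerns pairs $x \neq y$ with $L(x) \cong L(y)$, and the question is precisely whether such a pair can be separated by some choice of labels.

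For the forward (``only if'') direction I would argue by contrapositive: suppose some distinct pair $x,y$ has both $L(x) \cong L(y)$ and $E(L(x)) = E(L(y))$. Since the edge sets coincide as subsets of $E(G)$, any labeling $l$ assigns identical labels to these common edges, so the non-isolated parts of $L_l(x)$ and $L_l(y)$ are literally the same labeled graph. Because $L(x) \cong L(y)$ forces $|N(x)| = |N(y)|$ while the equal edge sets force equally many non-isolated vertices, the two links have the same number of isolated vertices; extending the identity on the shared edges by any bijection between these isolated vertices yields a label-preserving isomorphism $L_l(x) \cong L_l(y)$. Thus no labeling separates $x$ and $y$, so $G$ admits no link-irregular labeling, which is the contrapositive of the desired implication.

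For the reverse (``if'') direction the key idea is a rainbow labeling: assuming the hypothesis, enumerate $E(G)$ and assign distinct positive integers $1,2,\dots$ so that $l$ is injective. I claim this $l$ is link-irregular. Pairs with $L(x) \not\cong L(y)$ need no argument by the opening observation. For a pair with $L(x) \cong L(y)$ the hypothesis gives $E(L(x)) \neq E(L(y))$; suppose toward a contradiction that $\phi$ is a label-preserving isomorphism $L_l(x) \to L_l(y)$. Each edge $e$ of $L(x)$ is sent to an edge of $L(y)$ carrying the same label, and injectivity of $l$ over all of $E(G)$ forces that image to be $e$ itself; hence $E(L(x)) \subseteq E(L(y))$, and applying the same reasoning to $\phi^{-1}$ gives equality, contradicting $E(L(x)) \neq E(L(y))$.

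I expect the main subtlety to be bookkeeping rather than depth. In the forward direction one must verify carefully that an unlabeled-link isomorphism together with an identical edge set genuinely lifts to a label-preserving isomorphism, which requires separately accounting for the isolated vertices of the links. In the reverse direction the point to emphasize is that although $\phi$ need not fix the vertices of the links, injectivity of the labeling pins down the \emph{edges}, and it is exactly the edges (not the vertices) that the hypothesis controls; making this distinction explicit is what keeps the argument honest.
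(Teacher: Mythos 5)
Your proposal is correct and follows essentially the same route as the paper: the forward direction rests on the observation that isomorphic links with identical edge sets force isomorphic labeled links under any labeling, and the converse uses a rainbow (all-distinct) labeling. If anything, you supply more detail than the paper does in the forward direction by explicitly matching up the isolated vertices of the two links.
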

	
	\begin{proof}
		
		Suppose the graph $G$ has a link-irregular labeling $l$. If $G$ is also a link-irregular graph, then the result follows immediately. Now suppose, for contradiction, that $G$ is not a link-irregular graph. Then there exist distinct vertices $x, y \in V(G)$ such that $L(x) \cong L(y)$ as unlabeled graphs. However, since $l$ is a link-irregular labeling, we have $L_l(x) \ncong L_l(y)$. But this is only possible if $E(L(x)) \ne E(L(y))$, because if $L(x) \cong L(y)$ and $E(L(x)) = E(L(y))$, then the labeled links would necessarily be isomorphic, contradicting the assumption that $L_l(x) \ncong L_l(y)$. Therefore, we conclude that $E(L(x)) \ne E(L(y))$ in $G$.\\

		For the converse, suppose $G$ is an unlabeled graph such that for all distinct $x, y \in V(G)$, either $L(x) \not\cong L(y)$ or $E(L(x)) \neq E(L(y))$. Create a labeling $l$ of the edges of $G$ by assigning a distinct positive integer to each edge. Then, for any two distinct vertices $x, y \in V(G)$, if $L(x) \not\cong L(y)$ as unlabeled graphs, then certainly $L_l(x) \ncong L_l(y)$. Otherwise, suppose $L(x) \cong L(y)$. Then $E(L(x)) \neq E(L(y))$, so there exists an edge $e \in E(L(x))$ such that $e \notin E(L(y))$. Since we assigned distinct labels to each edge in $G$, it follows that $e$ with its label appears in $L_l(x)$ but not in $L_l(y)$, and hence $L_l(x) \ncong L_l(y)$. Therefore, this is a link-irregular labeling.
	\end{proof}
	
	\begin{defi}
		The \emph{active neighborhood} of a vertex $x$ in a graph $G$, denoted $N_a(x)$, is the set of neighbors of $x$ that are adjacent to at least one other neighbor of $x$, that is, neighbors that are not isolated in the link $L(x)$. Formally,
		\[
		N_a(x) = \{ u \in N(x) : \deg_{G[N(x)]}(u) \ge 1 \}.
		\]
	\end{defi}
	
	The following result is a direct consequence of Theorem \ref{thm1} and the definition of active neighborhood.
	
	\begin{cor}\label{cor1}
		A graph $G$ has a link-irregular labeling if and only if for all distinct $x, y \in V(G)$, at least one of the following holds:
		\begin{enumerate}[label=(\alph*)]
			\item $L(x) \not\cong L(y)$, or
			\item both of the following:
			\begin{enumerate}[label=(\roman*)]
				\item $N_a(x) \ne N_a(y)$, and
				\item For each integer $n \ge 0$, at most one vertex $x \in V(G)$ satisfies $L(x) \cong \bar{K}_n$.
			\end{enumerate}
		\end{enumerate}
	\end{cor}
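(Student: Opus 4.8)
The plan is to reduce the whole statement to Theorem~\ref{thm1} by recognizing that the active-neighborhood clause in~(b) is merely a reformulation of the edge-set condition $E(L(x)) \neq E(L(y))$ appearing there. The crucial point is that $L(x) = G[N(x)]$ is an \emph{induced} subgraph, so its edge set is completely pinned down by which neighbors of $x$ are non-isolated. First I would establish the identity $E(L(x)) = E(G[N_a(x)])$: every edge of $L(x)$ has both endpoints non-isolated, hence lying in $N_a(x)$, and conversely any edge of $G$ with both endpoints in $N_a(x) \subseteq N(x)$ is automatically an edge of the induced subgraph $L(x)$. From this identity it follows immediately that $E(L(x)) = E(L(y))$ if and only if $N_a(x) = N_a(y)$: one direction is equality of supports, and the other is that $N_a(\cdot)$ determines $E(L(\cdot))$ through the fixed ambient graph $G$. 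Equivalently, $E(L(x)) \neq E(L(y))$ precisely when $N_a(x) \neq N_a(y)$.

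With this equivalence, Theorem~\ref{thm1} restates as: $G$ has a link-irregular labeling if and only if for every pair of distinct vertices $x,y$ we have $L(x)\not\cong L(y)$ or $N_a(x)\neq N_a(y)$, that is, condition~(a) or condition~(i) holds for each pair. It then remains to locate the global clause~(ii) within this picture and show it is forced. I would argue that~(ii) is exactly the statement that no two distinct vertices share an edgeless link of the same size: if $L(x)\cong \bar{K}_n\cong L(y)$, then both links are edgeless, so $N_a(x)=N_a(y)=\emptyset$ and~(i) fails, while $L(x)\cong L(y)$ makes~(a) fail as well. Hence any pair witnessing the failure of~(ii) also witnesses the failure of ``(a) or (i),'' so the per-pair criterion ``(a) or (i)'' already implies the global clause~(ii).

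The proof then assembles by distributivity. The corollary's requirement ``for all $x\neq y$: (a) or ((i) and (ii))'' splits into the conjunction of ``for all $x\neq y$: (a) or (i)'' and ``for all $x\neq y$: (a) or (ii).'' The first conjunct is precisely the restatement of Theorem~\ref{thm1} obtained above. For the second conjunct I would show it is equivalent to~(ii) alone: if~(ii) holds it is trivially satisfied, and if~(ii) fails then the offending pair of vertices with isomorphic edgeless links forces~(a) to fail, collapsing the second conjunct to the false assertion that $G$ is link-irregular. Since the first conjunct already implies~(ii) by the edgeless-link argument, the entire corollary condition reduces to the single restated Theorem~\ref{thm1} criterion, which is equivalent to the existence of a link-irregular labeling.

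The step I expect to be the main obstacle is conceptual rather than computational: disentangling the slightly redundant logical structure of~(b)—seeing that clause~(ii) imposes no restriction beyond ``(a) or (i)''—and correctly absorbing the edgeless corner case, where $N_a(\cdot)$ collapses to $\emptyset$ and can no longer separate two links, into the argument. Once the induced-subgraph identity is in hand, the rest follows mechanically from Theorem~\ref{thm1}.
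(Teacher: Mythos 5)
Your proof is correct and follows essentially the same route as the paper: reduce to Theorem~\ref{thm1} and translate the edge-set condition $E(L(x))\neq E(L(y))$ into the active-neighborhood condition via the identity $E(L(x))=E(G[N_a(x)])$. You are in fact somewhat more careful than the paper in handling the logical role of the global clause~(ii), showing explicitly that it is absorbed by the per-pair criterion ``(a) or (i)''; the paper instead treats the edgeless-link case as a separate branch of the case analysis, but the substance is the same.
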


	\begin{proof}
		This is an immediate consequence of Theorem~\ref{thm1}, which states that $G$ is labeled link-irregular if and only if for all distinct $x, y \in V(G)$, either $L(x) \not\cong L(y)$ or $E(L(x)) \ne E(L(y))$. Condition (a) directly comes from the theorem. Now suppose $L(x) \cong L(y)$. Then, by the theorem, we must have $E(L(x)) \ne E(L(y))$. This occurs in one of two ways:
		\begin{itemize}
			\item If $L(x) \cong L(y) \cong \bar{K}_n$, then $E(L(x)) = \emptyset$, and the only way $E(L(x)) \ne E(L(y))$ is that $E(L(x))$ is empty set for at most one vertex $x$ in $G$ of degree $n$ that establishes condition (c).
			\item If $L(x)$ and $L(y)$ are non-empty but isomorphic, then any difference in their edge sets implies a difference in the neighborhoods of $x$ and $y$. Then, $N_a(x) \ne N_a(y)$, which gives condition (b).
		\end{itemize}
		
		Note that the original condition $E(L(x)) \ne E(L(y))$ is equivalent to the combined conditions (b) and (c), which completes the proof.
	\end{proof}
	
	The following observation is similar to Corollary \ref{cor1}, but states only the necessary condition. 
	
	\begin{obs}
		If a graph $G$ admits a link-irregular labeling, then:
		\begin{enumerate}
			\item $N(x) \ne N(y)$ for all distinct $x, y \in V(G)$, and
			\item For each $n \ge 0$, at most one vertex $x$ satisfies $L(x) \cong \bar{K}_n$.
		\end{enumerate}
	\end{obs}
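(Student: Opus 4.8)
The plan is to derive both statements as contrapositive consequences of Theorem~\ref{thm1}, which characterizes the existence of a link-irregular labeling: for every distinct pair $x, y \in V(G)$ one must have $L(x) \not\cong L(y)$ or $E(L(x)) \neq E(L(y))$. Each of the two claimed necessary conditions corresponds to a degenerate situation in which this characterization fails, so assuming that $G$ admits a link-irregular labeling rules it out. I would therefore prove each part by assuming its negation and exhibiting a pair of vertices violating the criterion of Theorem~\ref{thm1}.

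For the first part, I would argue by contradiction. Suppose $N(x) = N(y)$ for some distinct $x, y$. Then the labeled links $L_l(x)$ and $L_l(y)$ are induced on the identical vertex set, with edges and labels inherited from the single fixed labeling $l$ of $G$; hence they are not merely isomorphic but literally the same labeled graph. In particular $L(x) \cong L(y)$ and $E(L(x)) = E(L(y))$, so the criterion of Theorem~\ref{thm1} is violated, contradicting that $l$ is link-irregular. (A minor point worth recording is that $N(x) = N(y)$ forces $x$ and $y$ to be non-adjacent, since $x \notin N(x)$ in a loopless graph, but this does not affect the argument.)

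For the second part, suppose toward a contradiction that two distinct vertices $x, y$ both satisfy $L(x) \cong \bar{K}_n$ and $L(y) \cong \bar{K}_n$ for the same $n$. Then $L(x) \cong L(y)$ as unlabeled graphs, and moreover $E(L(x)) = E(L(y)) = \emptyset$. Because an edgeless link carries no edges to label, the labeling $l$ cannot distinguish them: $L_l(x)$ and $L_l(y)$ are both edgeless labeled graphs on $n$ vertices, hence isomorphic. This again contradicts Theorem~\ref{thm1}, so at most one vertex can have an empty link of each given order.

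The content here is light, so I do not expect a serious obstacle; the only step requiring care is keeping the labeled versus unlabeled distinction straight. Specifically, in the second part one must note that isomorphism of labeled links is insensitive to the labels precisely when there are no edges, which is exactly the situation that $L(x) \cong \bar{K}_n$ singles out. This is also why the statement restricts to empty links rather than to links isomorphic to some fixed nonempty graph, where differing labels could in principle restore a distinction.
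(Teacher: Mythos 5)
Your proposal is correct and matches the paper's intent: the paper states this observation without proof as an immediate consequence of Theorem~\ref{thm1} (and Corollary~\ref{cor1}), and your argument is exactly that consequence spelled out — in each case you exhibit a pair $x,y$ with $L(x)\cong L(y)$ and $E(L(x))=E(L(y))$, violating the characterization. No issues.
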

	These conditions are necessary but not sufficient for the existence of a link-irregular labeling. To illustrate this, consider a graph $G$ in which two vertices $x$ and $y$ have isomorphic links of the form $K_2 + 2K_1$ (a disjoint union of an edge and two isolated vertices), and $G[N(x) \cap N(y)] = K_2$. In this case, although $N(x) \ne N(y)$ and at most one vertex may have an empty link, we have $L_l(x) \ncong L_l(y)$, and $G$ does not admit a link-irregular labeling, despite satisfying conditions (1) and (2).\ \\

	The following observation identifies several graph families that do not admit the link-irregular labeling conditions.
	
	
	\begin{obs}
		The following families of graphs do not admit a link-irregular labeling: all bipartite graphs, trees, cycles $C_n$ for $n \geq 4$, hypercubes $Q_n$ for $n \geq 2$, and complete multipartite graphs with at least one part containing two or more vertices.
	\end{obs}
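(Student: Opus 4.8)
The plan is to apply Theorem~\ref{thm1} (equivalently the necessary conditions in the preceding observation) to each family, showing in every case that there exist two distinct vertices $x,y$ with $L(x)\cong L(y)$ \emph{and} $E(L(x))=E(L(y))$, which by the theorem obstructs any link-irregular labeling. For all the bipartite families the key structural fact is that the link $L(v)=G[N(v)]$ is an \emph{edgeless} graph $\bar K_{d(v)}$, since the neighborhood of a vertex in a bipartite graph is an independent set. I would first handle bipartite graphs in full generality and then treat trees, cycles, and hypercubes as special cases of the bipartite argument, reserving a separate argument for complete multipartite graphs.

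First I would prove the bipartite case. Since every link is an empty graph, two vertices $x,y$ of the same degree $k$ satisfy $L(x)\cong\bar K_k\cong L(y)$; by condition (2) of the preceding observation (at most one vertex may have link $\bar K_n$ for each $n$), a link-irregular labeling forces all vertex degrees to be distinct. But a simple graph cannot have all distinct degrees (the classical irregularity obstruction noted in the introduction). I would need to dispatch the only loophole: a graph on $\le 2$ vertices, and more generally any bipartite graph that is too small; for $K_2$ and $K_1$ the links are all empty, but the empty-link condition still allows at most one vertex per degree, so these degenerate cases must be checked or excluded by noting the statement concerns genuine families. Trees and forests are bipartite, so they inherit this argument directly; cycles $C_n$ with $n\ge 4$ are $2$-regular with every link equal to $\bar K_2$, so any two vertices violate condition (2); and hypercubes $Q_n$ with $n\ge 2$ are bipartite and $n$-regular, so all $2^n\ge 4$ vertices share the link $\bar K_n$, again violating condition (2).

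For complete multipartite graphs $K_{n_1,\dots,n_r}$ with some part of size $\ge 2$, the links are no longer empty, so I would argue differently. The point is that two vertices $x,y$ \emph{in the same part} have identical neighborhoods: $N(x)=N(y)=V(G)\setminus(\text{their common part})$. Hence $L(x)$ and $L(y)$ are induced on exactly the same vertex set with exactly the same edges, giving $L(x)=L(y)$ literally (so both $L(x)\cong L(y)$ and $E(L(x))=E(L(y))$). By Theorem~\ref{thm1} this rules out a link-irregular labeling, and this is precisely where the hypothesis ``at least one part of size $\ge 2$'' is used, since two such vertices exist exactly when some part has two or more vertices.

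The main obstacle I anticipate is not any single family but rather the bookkeeping of small-order exceptions in the bipartite argument: the degree-sequence obstruction ``no simple graph has all distinct degrees'' is the engine, but I must confirm it interacts correctly with condition (2) in every borderline case (very small graphs, graphs with isolated vertices whose links are $\bar K_0$, and disconnected bipartite graphs), so that no clever small example sneaks through. Once the empty-link observation and the ``no all-distinct-degrees'' fact are combined cleanly, each of trees, cycles, and hypercubes reduces to a one-line instantiation, and the complete multipartite case stands on the separate same-neighborhood observation.
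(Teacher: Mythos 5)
Your proposal is correct and follows essentially the same route as the paper: edgeless links plus the pigeonhole on degrees for the bipartite-type families, and identical neighborhoods within a part for complete multipartite graphs. One small remark: odd cycles are not bipartite, so calling $C_n$ a ``special case of the bipartite argument'' is a mislabel, but your actual justification (every link of $C_n$, $n\ge 4$, is $\bar K_2$, violating the at-most-one-empty-link condition) is exactly right and matches the paper's treatment of cycles and hypercubes via regularity.
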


	\begin{proof}
		Each of these graph families violates at least one of the necessary conditions stated in Corollary~\ref{cor1}.\\
		Cycles $C_n$ and hypercubes $Q_n$ are all regular graphs in which every vertex has an identical neighborhood structure in them. Consequently, for all $x \in V(G)$, the links $L(x)$ are pairwise isomorphic with identical edge sets. Therefore, no edge labeling can distinguish their labeled links.\\
		Bipartite graphs have the property that for every vertex $x$, the link $L(x)$ is edgeless. And multiple vertices in the bipartite graphs have the same degree since the set of degrees $D(G)$ has size less than $n$. This leads to multiple identical edgeless links, violating the uniqueness condition for link-irregular labeling. Trees are a subclass of bipartite graphs, and the above argument applies.\\ 
		If two vertices $x, y$ of a complete multipartite graph lie in the same part, then they have the same neighborhood, hence $L(x)\cong L(y)$ regardless of any possible edge labeling.
	\end{proof}
	
	We use $W_n$ as a wheel graph, which is a graph formed by connecting a single universal vertex to all vertices of a cycle $C_n$. Note that $W_4$ does not admit the link-irregular labeling. It can be seen that two non-adjacent vertices on the circle have isomorphic labeled links. \\
	
	The following observation identifies several graph families that admit the link-irregular labeling conditions. 
	
	\begin{obs}
		The following families of graphs admit a link-irregular labeling:
		\begin{itemize}
			\item Complete graphs $K_n$, $n\geq 3$,
			\item Wheels $W_n$, $n=3$, and $n\geq 5$.
		\end{itemize}
	\end{obs}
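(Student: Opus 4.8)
The plan is to verify the criterion of Theorem~\ref{thm1} directly for each family: it suffices to show that for every pair of distinct vertices $x, y$, either the unlabeled links are non-isomorphic or their edge sets differ.

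For the complete graphs $K_n$ with $n \geq 3$, first I would observe that every vertex $v$ has neighborhood $V(G) \setminus \{v\}$, so every link is isomorphic to $K_{n-1}$; the unlabeled links never distinguish vertices, and the whole argument rests on the edge sets. Fixing distinct $x, y$, I would note that $E(L(x))$ consists of all edges of $K_n$ avoiding $x$, and since $n \geq 3$ there is a third vertex $z \notin \{x,y\}$; the edge $\{y,z\}$ then lies in $E(L(x))$ but not in $E(L(y))$, whence $E(L(x)) \neq E(L(y))$. Theorem~\ref{thm1} then yields the labeling. The case $W_3$ needs no separate work, since the rim $C_3$ together with the hub is exactly $K_4$, already covered above.

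For the wheels $W_n$ with $n \geq 5$, I would write $h$ for the hub and $v_1, \dots, v_n$ for the rim vertices, with indices read modulo $n$. The verification splits into two tasks. The hub is easy: its link is the induced subgraph on all rim vertices, namely the cycle $C_n$, whereas each rim vertex $v_i$ has neighbors $\{v_{i-1}, v_{i+1}, h\}$, and for $n \geq 4$ the two rim neighbors are non-adjacent, so $L(v_i) \cong P_3$. Since $n \geq 5 > 3$, we have $C_n \not\cong P_3$, and the hub is separated from every rim vertex by its unlabeled link alone.

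The only genuine obstacle is separating the rim vertices from one another, since all of them have link $P_3$. Here I would identify $E(L(v_i))$ explicitly: the path $L(v_i)$ is $v_{i-1} - h - v_{i+1}$, so its two edges are the spokes $\{h, v_{i-1}\}$ and $\{h, v_{i+1}\}$, meaning $E(L(v_i))$ is determined by the index set $\{i-1, i+1\}$ modulo $n$. For distinct $i, j$, the equality $E(L(v_i)) = E(L(v_j))$ forces $\{i-1, i+1\} \equiv \{j-1, j+1\} \pmod n$, which holds either when $i \equiv j$ (excluded) or when $i \equiv j+2$ and $i \equiv j-2$, forcing $4 \equiv 0 \pmod n$. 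For $n \geq 5$ this is impossible, so $E(L(v_i)) \neq E(L(v_j))$ for all distinct rim vertices, and Theorem~\ref{thm1} applies. The same computation also explains why $n = 4$ fails, matching the remark preceding the statement. This modular index bookkeeping is the delicate part; everything else is a direct structural reading of the links.
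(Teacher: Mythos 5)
Your proof is correct and takes essentially the same route as the paper: the paper simply assigns a distinct label to every edge and asserts that the resulting labeled links are pairwise non-isomorphic, which is exactly the sufficiency direction of Theorem~\ref{thm1} that you invoke. You additionally supply the explicit verification (the third-vertex argument for $K_n$ and the modular index computation for the rim of $W_n$) that the paper leaves unstated.
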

	\begin{proof}
		When we assign a distinct label to each edge of the graphs in both families, all labeled links are pairwise non-isomorphic. Hence, the desired result is concluded.
	\end{proof}
	
	Motivated by the above examples, we define a new graph invariant that quantifies the minimum number of labels required for a graph to admit a link-irregular labeling.
	
	\begin{defi}
		Let $G$ be a graph. The link-irregular labeling number of $G$, denoted by
		$\eta(G)$, is the minimum number of edge labels required to assign to $E(G)$ such that the resulting labeled graph is labeled link-irregular 
		. If no such labeling exists, we define $\eta(G) = \infty$.
	\end{defi}
	
	\begin{obs}
		A graph $G$ is link-irregular if and only if $\eta(G)=1$.
	\end{obs}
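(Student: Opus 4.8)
The plan is to observe that a labeling using a single label carries no more information than the underlying unlabeled graph, so that link-irregularity of the labeled structure collapses exactly onto link-irregularity in the original sense. The crux is the following equivalence: if $l$ assigns the same label to every edge of $G$, then for any vertices $x,y$ we have $L_l(x) \cong L_l(y)$ (as labeled graphs) if and only if $L(x) \cong L(y)$ (as unlabeled graphs). Indeed, when only one label is in use, any graph isomorphism between the links automatically preserves labels, since there is nothing to distinguish. I would state and justify this equivalence first, as it drives both directions.

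For the forward direction, suppose $G$ is link-irregular, so that $L(x) \not\cong L(y)$ for all distinct $x,y \in V(G)$. I would assign the uniform labeling $l(e)=1$ for every $e \in E(G)$. By the equivalence above, $L_l(x) \ncong L_l(y)$ for all distinct $x,y$, so $l$ is a link-irregular labeling using exactly one label. Since any edge-labeling of a graph with at least one edge uses at least one label, the minimum cannot drop below $1$, and hence $\eta(G)=1$.

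For the converse, suppose $\eta(G)=1$. Then there is a link-irregular labeling $l$ using a single label; equivalently, $l$ is (up to relabeling) the uniform labeling. By the same equivalence, $L_l(x) \ncong L_l(y)$ forces $L(x) \not\cong L(y)$ for all distinct $x,y$, which is precisely the statement that $G$ is link-irregular.

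I do not anticipate a genuine obstacle here, as the result is immediate once the single-label equivalence is made explicit; the only point requiring a moment of care is the bookkeeping detail that $\eta(G)\ge 1$ whenever $G$ has an edge, which pins the minimum at exactly $1$ rather than merely showing that one label suffices. I would therefore keep the writeup short and centered on the equivalence between uniform labelings and unlabeled links.
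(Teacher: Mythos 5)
Your argument is correct and is exactly the unpacking the paper intends: the observation is stated without proof as immediate from the definitions, and your key point (a uniform labeling carries no information beyond the underlying graph, so labeled links are isomorphic iff unlabeled links are) is the whole content. The only caveat, which lies with the paper's statement rather than your proof, is the degenerate case $G=K_1$, which is vacuously link-irregular yet has $\eta(K_1)=0$ by the paper's own later convention; your remark that the minimum is pinned at $1$ only when $G$ has an edge quietly flags this.
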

	
	In the following, we find the link-irregular labeling number of the complete graphs and Wheels.
	
	\begin{theorem}
		Let $ i_j \ge 5 $ for $ j \ge 3 $, and define the intervals for $ i_j $ based on the parity of $ j $ as follows:
		\[
		5 \leq i_3 \leq \binom{3+1}{2} < i_4 \leq \binom{4+1}{2} - \frac{4}{2} <  \cdots < i_j \leq  
		\begin{cases}
			\binom{j+1}{2} & \text{if } j \text{ is odd}, \\
			\binom{j+1}{2} - \frac{j}{2}  & \text{if } j \text{ is even}.
		\end{cases} < \cdots
		\]
		Then for each such $ i_j \neq 6,8$, we have $ \eta(W_{i_j}) = j $. Moreover, for large enough $n$, $\eta(W_{n}) \approx \sqrt{2n}$. In addition,  $\eta(W_3) = 3$, $\eta(W_6) = 5$, $\eta(W_8) = 5$ and $\eta(W_4) = \infty$.
		
		
	\end{theorem}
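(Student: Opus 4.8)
The plan is to reduce the whole problem to a question about closed trails in a complete graph with loops.

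\textbf{Step 1 (reduction to spoke labels).} For $n\ge 4$ the central vertex $c$ of $W_n$ has as its labeled link a labeled copy of $C_n$ on $n$ vertices, while every rim vertex $v_i$ has $N(v_i)=\{c,v_{i-1},v_{i+1}\}$ and hence labeled link a path $P_3$ on three vertices with center $c$; since $n\ne 3$ these vertex counts differ, so the center's link is automatically non-isomorphic to every rim link, and the labels on the cycle edges are irrelevant (they may all be reused from the spoke alphabet). Writing $a_i=l(cv_i)$ for the spoke labels, the labeled link of $v_i$ is, up to isomorphism, exactly the unordered pair $\{a_{i-1},a_{i+1}\}$. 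Thus $\eta(W_n)$ equals the least $k$ for which there is a cyclically indexed sequence $a_1,\dots,a_n\in\{1,\dots,k\}$ with the $n$ pairs $\{a_{i-1},a_{i+1}\}$ pairwise distinct.

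\textbf{Step 2 (trail reformulation and the bound $\ell(k)$).} Reindexing by the map $i\mapsto i+2 \pmod n$, I would observe that for $n$ odd the pairs $\{a_{i-1},a_{i+1}\}$ are the consecutive pairs of a single cyclic rearrangement of the $a_i$, so the labeling exists iff $K_k^{\mathrm{loop}}$ (the complete graph on $k$ vertices with a loop added at each vertex) contains a \emph{closed trail} of length exactly $n$, a loop realizing a pair $\{a,a\}$. For $n$ even the map splits the indices into two residue classes, so the condition becomes the existence of \emph{two edge-disjoint} closed trails, each of length $n/2$. In either case the edges used form an even subgraph of $K_k^{\mathrm{loop}}$, and since a loop preserves degree parity, the largest such subgraph is $K_k$ (for $k$ odd) or $K_k$ minus a perfect matching (for $k$ even) together with all $k$ loops, giving maximum trail length
\[
\ell(k)=\begin{cases}\binom{k+1}{2}, & k \text{ odd},\\[1mm] \binom{k+1}{2}-\tfrac{k}{2}, & k \text{ even}.\end{cases}
\]
These are exactly the interval endpoints in the statement, and connectivity of this extremal even subgraph makes it Eulerian, so the bound is attained. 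This yields the lower bound $\eta(W_n)\ge\min\{k:\ell(k)\ge n\}$ and, since $\ell(k)=\tfrac12 k^2+O(k)$, the asymptotics $\eta(W_n)\approx\sqrt{2n}$.

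\textbf{Step 3 (constructions, exceptions, and the main obstacle).} It remains to realize a closed trail of each admissible length. For odd $n$ I would start from the Eulerian circuit of the extremal even subgraph (length $\ell(j)$) and shorten it in steps of $1$ by deleting loops and in larger steps by deleting edge-disjoint cycles, verifying that these moves keep the subgraph connected and even and that the reachable lengths cover the entire interval $\ell(j-1)<n\le\ell(j)$. For even $n$ the heart of the argument is splitting the edge set into two equal closed trails; here I would use a $2$-factorization of the non-loop part and absorb the loops along Hamiltonian cycles. \emph{The main obstacle is precisely this even split at the top of each interval.} For $j=3$ one needs two edge-disjoint triangles, impossible in $K_3$ or $K_4$, and for $j=4$ the extremal subgraph is a $4$-cycle plus four loops whose loops cannot be absorbed into two length-$4$ trails; both first become feasible at $k=5$, forcing the exceptions $\eta(W_6)=\eta(W_8)=5$, whereas for $j\ge 5$ enough non-loop edges are present to carry the loops and no exception occurs. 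Finally $W_3=K_4$ gives $\eta(W_3)=3$ by the complete-graph case, and $W_4$ fails because its two antipodal rim vertices $v_1,v_3$ always induce the same pair $\{a_2,a_4\}$ (a closed trail of length $2$ must repeat its edge), so $\eta(W_4)=\infty$.
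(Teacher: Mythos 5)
Your proposal follows essentially the same route as the paper: reduce to the spoke labels $a_i$, note that the rim links are the unordered pairs $\{a_{i-1},a_{i+1}\}$, use the shift $i\mapsto i+2$ to recast the problem as one closed trail (odd $n$) or two edge-disjoint closed trails of length $n/2$ (even $n$) in the looped complete graph $K_r^*$, read the interval endpoints off the maximum even subgraph (all of $K_r^*$ for $r$ odd, minus a perfect matching for $r$ even), and trace the exceptions $n=6,8$ to the impossibility of the required two-trail split for $r=3,4$. If anything, your Step 3 is more explicit than the paper, which simply asserts ``this establishes the result'' without verifying that every length in each interval is realizable; your sketch of shortening the Eulerian circuit and of splitting the even subgraph for even $n$ names exactly the step that still needs to be carried out in full in both arguments.
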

	\begin{proof}
		In the wheel graph $W_n$, the link of the universal vertex is $C_n$, which is unique. All other vertices lie on the cycle and have links isomorphic to a path of length two, where the central vertex of the path is the universal vertex. Thus, to ensure labeled link-irregularity, the labels of the edges from the universal vertex to the cycle vertices must yield non-isomorphic labeled links.
		

		Let $A$ denote the set of $n$ edges incident to the universal vertex in the wheel graph $W_n$. These edges connect the universal vertex to each of the $n$ vertices on the cycle. Each such edge $e \in A$ is involved in exactly two vertex links: the link of one cycle vertex adjacent to $e$ and the link of the cycle vertex which is in distance 2 in the clockwise or counterclockwise direction on the cycle. Therefore, every link of a cycle vertex involves two edges from the set $A$. When we assign labels to the edges in $A$ using $r$ distinct labels, each cycle vertex's link corresponds to a multiset of two labels. Since there are $n$ such links and each labeled link is formed from a pair of labels, we obtain $n$ such labeled multisets that should be non-isomorphic.
		
		Now consider $K_r^*$, the complete graph on $r$ labeled vertices with a loop at each vertex (but no multiple edges). A labeling of $A$ corresponds to a closed trail in $K_r^*$: each labeled link corresponds to an edge in the trail connecting the two labels used in that link. Since each edge in $A$ belongs to exactly two links, and links must be distinct, this process generates a trail that covers the edges of $A$.

		\begin{figure}[H]
			\centering
			\begin{subfigure}[t]{0.48\textwidth}
				\centering
				\begin{tikzpicture}[scale=0.9, every node/.style={circle, draw, minimum size=8pt, inner sep=2pt}]
					\def\n{15}
					\def\radius{2.8cm}
					
					\foreach \i in {1,...,\n} {
						\coordinate (n\i) at ({360/\n*(\i-1)}:\radius);
						\draw (n\i) -- (0,0); 
					}
					
					\foreach \i [evaluate=\i as \next using {int(mod(\i,\n)+1)}] in {1,...,\n} {
						\draw (n\i) -- (n\next);
					}
					
					\filldraw[black] (0,0) circle (1pt);
					
					\filldraw[black] (n15) circle (1pt); 

					\node at ($(0,0)!.7!(n1)$)  [above=.5pt, draw=none, shape=rectangle] {1};
					\node at ($(0,0)!0.7!(n2)$) [above=.7pt, draw=none, shape=rectangle] {4};
					\node at ($(0,0)!0.7!(n3)$) [above=.7pt, draw=none, shape=rectangle] {4};
					\node at ($(0,0)!0.7!(n4)$) [left=.5pt, draw=none, shape=rectangle] {4};
					\node at ($(0,0)!0.7!(n5)$) [left=.5pt, draw=none, shape=rectangle] {2};
					\node at ($(0,0)!0.7!(n6)$) [right=.5pt, draw=none, shape=rectangle] {3};
					\node at ($(0,0)!0.7!(n7)$) [above=.5pt, draw=none, shape=rectangle] {5};
					\node at ($(0,0)!0.7!(n8)$) [above=.5pt, draw=none, shape=rectangle] {3};
					\node at ($(0,0)!0.7!(n9)$) [above=.5pt, draw=none, shape=rectangle] {3};
					\node at ($(0,0)!0.7!(n10)$) [above=.5pt, draw=none, shape=rectangle] {2};
					\node at ($(0,0)!0.7!(n11)$) [left=.5pt, draw=none, shape=rectangle] {1};
					\node at ($(0,0)!0.7!(n12)$) [left=.5pt, draw=none, shape=rectangle] {2};
					\node at ($(0,0)!0.7!(n13)$) [right=.5pt, draw=none, shape=rectangle] {5};
					\node at ($(0,0)!0.7!(n14)$) [right=.7pt, draw=none, shape=rectangle] {1};
					\node at ($(0,0)!0.7!(n15)$) [above=.5pt, draw=none, shape=rectangle] {5};
					
					\foreach \i [evaluate=\i as \j using {mod(\i,15)+1}] in {1,...,15} {
						\draw (n\i) -- (n\j);
					}
					
					\coordinate (mid1) at ($(n1)!0.5!(n2)$) ;
					\coordinate (mid1) at ($(n2)!0.5!(n3)$) ;
					\coordinate (mid1) at ($(n3)!0.5!(n4)$) ;
					\coordinate (mid1) at ($(n4)!0.5!(n5)$) ;
					\coordinate (mid1) at ($(n5)!0.5!(n6)$) ;
					\coordinate (mid1) at ($(n6)!0.5!(n7)$) ;
					\coordinate (mid1) at ($(n7)!0.5!(n8)$) ;
					\coordinate (mid1) at ($(n8)!0.5!(n9)$) ;
					\coordinate (mid1) at ($(n9)!0.5!(n10)$) ;
					\coordinate (mid1) at ($(n10)!0.5!(n11)$) ;
					\coordinate (mid1) at ($(n11)!0.5!(n12)$) ;
					\coordinate (mid1) at ($(n12)!0.5!(n13)$) ;
					\coordinate (mid1) at ($(n13)!0.5!(n14)$) ;
					\coordinate (mid1) at ($(n14)!0.5!(n15)$) ;
					\coordinate (mid1) at ($(n15)!0.5!(n1)$) ;

				\end{tikzpicture}
				\caption{$W_{15}$}
				\label{fig:W15}
			\end{subfigure}
			\hfill
			\begin{subfigure}[t]{0.48\textwidth}
				\centering
				\begin{tikzpicture}[scale=0.8, every node/.style={circle, draw, minimum size=8pt, inner sep=2pt}]
					\node (1) at (90:2cm)  {1};
					\node (2) at (162:2cm) {2};
					\node (3) at (234:2cm) {3};
					\node (4) at (306:2cm) {4};
					\node (5) at (18:2cm)  {5};

					\draw[mid arrow, bend left=15] (1) to (2);
					\draw[mid arrow, bend left=15] (2) to (3);
					\draw[mid arrow, bend left=15] (3) to (4);
					\draw[mid arrow, bend left=15] (4) to (5);
					\draw[mid arrow, bend left=15] (5) to (1);
					\draw[mid arrow, bend left=15] (1) to (3);
					\draw[mid arrow, bend left=15] (3) to (5);
					\draw[mid arrow, bend left=15] (5) to (2);
					\draw[mid arrow, bend left=15] (2) to (4);
					\draw[mid arrow, bend left=15] (4) to (1);
					
					\foreach \i/\inangle/\outangle in {
						1/60/120,
						2/150/210,
						3/210/270,
						4/300/360,
						5/30/90
					} {
						\draw[mid arrow, looseness=15, in=\inangle, out=\outangle] (\i) to (\i);
					}
					
				\end{tikzpicture}
				\caption{$K_5^*$}
				\label{fig:K5star}
			\end{subfigure}
			
			\caption{A closed directed trail with arcs 11, 12, 22, 23, 33, 34, 44, 45, 55, 51, 13, 35, 52, 24, 41 in $K_5^*$ corresponds to a link-irregular labeling of the wheel graph $W_{15}$. The labeling begins at the vertex marked in black on the cycle. Each subsequent label is assigned to a vertex located two positions clockwise from the previous one, continuing this pattern until all cycle vertices are labeled.}
			\label{fig:W15andK5star}
		\end{figure}
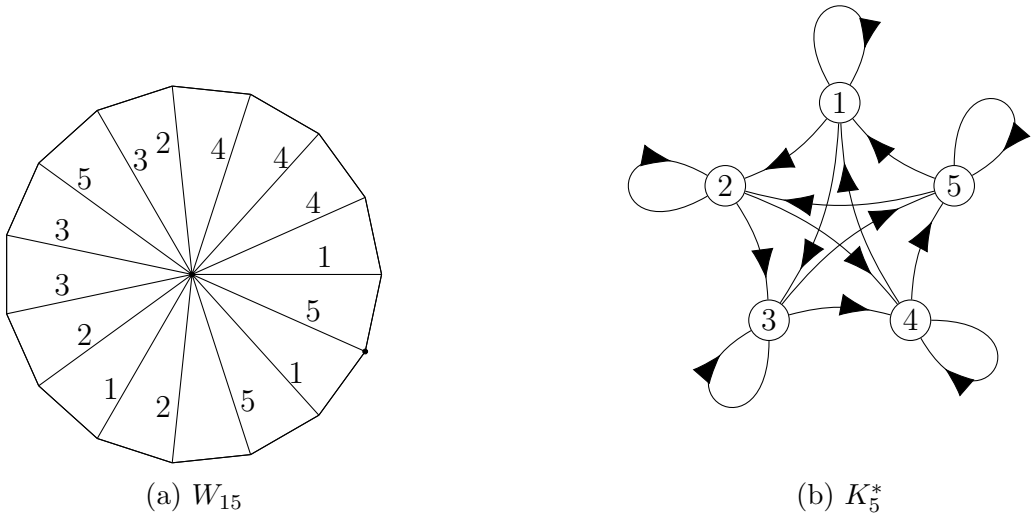
		
		When $n$ is odd, all edges in $A$ can be traversed in a single closed trail. When $n$ is even, they are split into two disjoint closed trails of length $n/2$. In either case, the labels used form the vertices of the trail(s), and the trail edges correspond to labeled links.
		
		Thus, finding the smallest number of labels $r$ such that $K_r^*$ contains a closed trail (or two edge-disjoint trails) covering $n$ edges is equivalent to determining $\eta(W_n)$.
		
		When $r$ is odd, $K_r^*$ has all vertices of even degree, and an Eulerian circuit (closed trail) exists that traverses all $\binom{r+1}{2}$ edges. When $r$ is even, the degrees are odd, and removing a perfect matching ($r/2$ edges) yields a graph with even degrees, supporting one or two equal-length edge-disjoint Eulerian circuits. Hence, for:
		\begin{center}
			even $r$: we need $n \le \binom{r+1}{2} - \frac{r}{2}$, \quad Odd $r$: we need $n \le \binom{r+1}{2}$.
		\end{center}
		This establishes the result.\\
		Also, based on the bounds for $n$ and $r$, we have one of the two inequalities $\binom{r}{2}<n\leq \frac{r^2}{2}$ and $\frac{r^2}{2}< n \leq \binom{r+1}{2}$. Solving each inequality for $r$ yields an approximation $r \approx \sqrt{2n}$ for large enough $n$. This confirms that $\eta(W_n) \approx \sqrt{2n}$ asymptotically.\\
		In addition, the following graph is labeled link-irregular $W_3$, and shows $\eta(W_3) = 3$. 
		\begin{figure}[H]
			\centering
			\begin{minipage}[t]{0.32\textwidth}
				\centering
				\includegraphics[scale = 0.15]{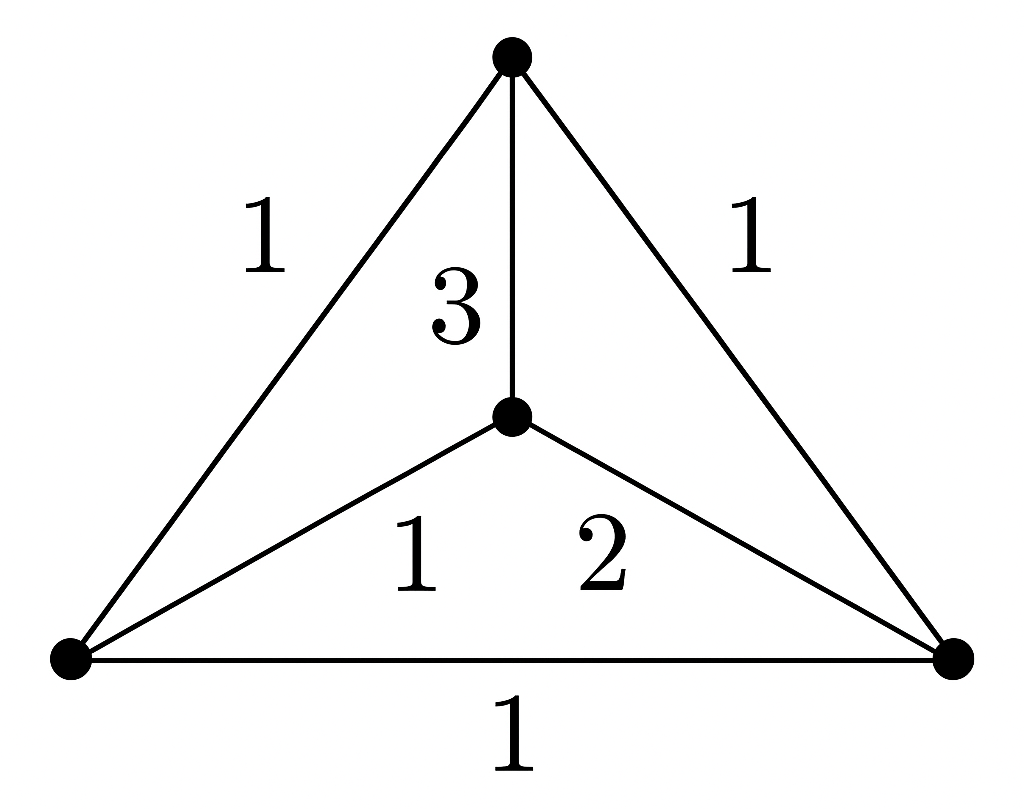}
				\caption*{(a) $W_3$}
			\end{minipage}
			\hfill
			\begin{minipage}[t]{0.32\textwidth}
				\centering
				\includegraphics[scale = 0.4]{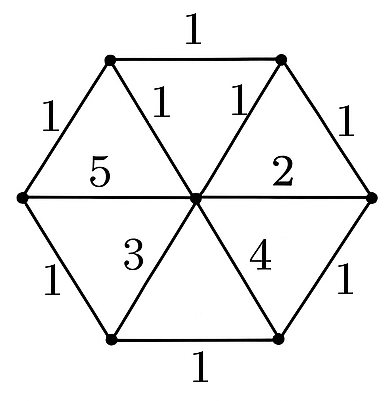}
				\caption*{(b) $W_6$}
			\end{minipage}
			\hfill
			\begin{minipage}[t]{0.32\textwidth}
				\centering
				\includegraphics[scale = 0.15]{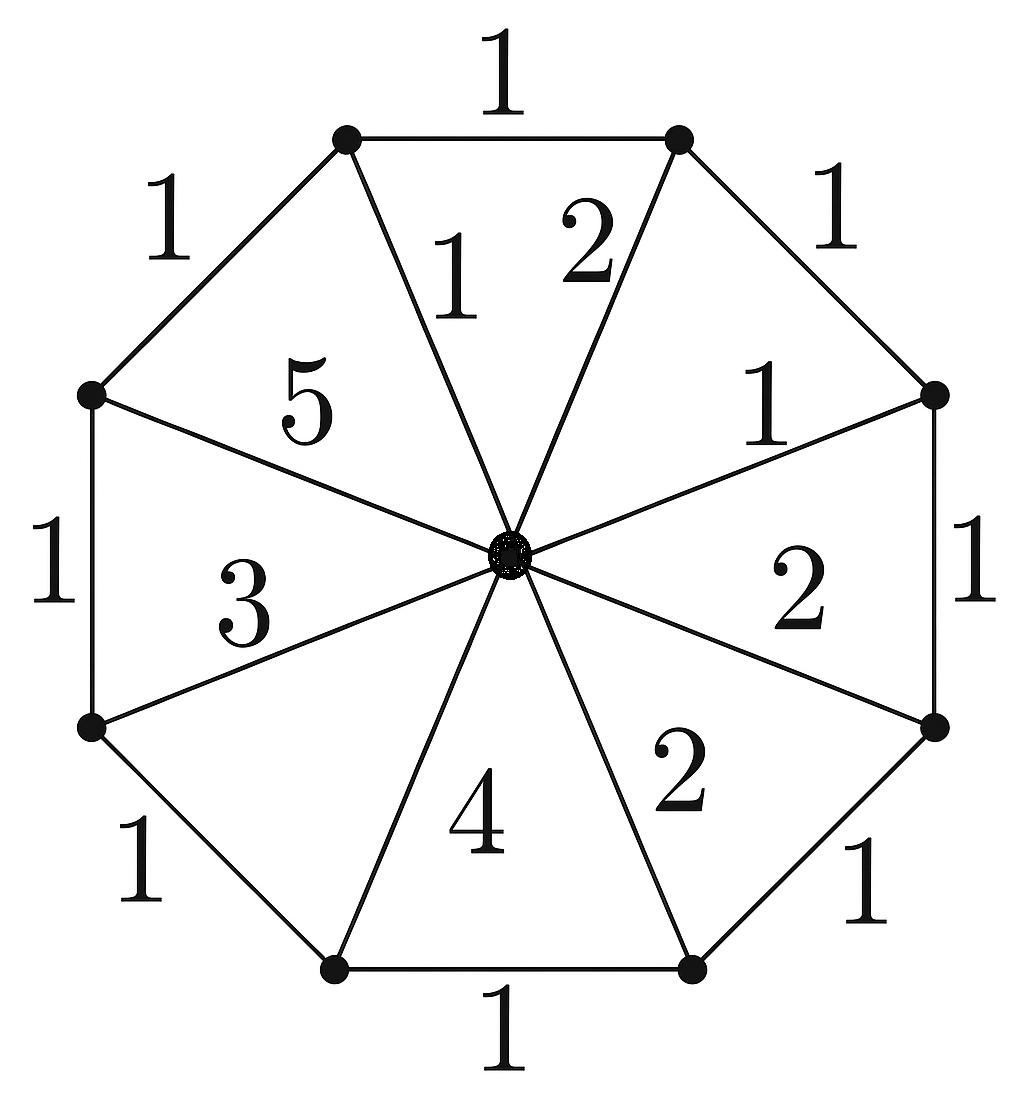}
				\caption*{(c) $W_8$}
			\end{minipage}
			\caption{A link-irregular labeling of $W_3$, $W_6$, and $W_8$}
			\label{fig:W3W6W8}
		\end{figure}
		Earlier, we mentioned that $W_4$ does not admit the link-irregular labeling. Then $\eta(W_4) = \infty$.
	\end{proof}

	We now aim to determine the link-irregular number of $K_n$ for $n \geq 3$. Observe that the labeled link of any vertex $v$ in an edge-labeled $K_n$ is the same as the graph $K_n-v$, which is a labeled $K_{n-1}$. Suppose that $K_n$ admits a link-irregular 2-labeling, and assume the edge labels used are ``red" and ``blue". Then all the key structural information of the labeled $K_n$ is contained in the graph $G_{red}$ that consists of all the vertices of $K_n$ and only the red edges. Note that $G_{red}$ may have vertices with degree zero. The labeled link of a vertex $v \in V(K_n)$ can be reconstructed from $G_{red} - v$ by adding any missing edges as blue. Consequently, we will be able to conclude that $K_n$ has a link-irregular 2-labeling if and only if there exists a graph $G$ on $n$ vertices such that for any two distinct vertices $u, v \in G$, $G-u \ncong G-v$. This motivates the following definition.
	
	
	\begin{defi}
		An unlabeled graph $G$ is said to be \textup{cut-irregular} if for all distinct vertices $u, v \in V(G)$, the vertex-deleted subgraphs $G - u$ and $G - v$ are non-isomorphic.
	\end{defi}

	We now explicitly state and prove the correspondence between the link-irregular 2-labelings of $K_n$ and cut-irregular graphs on $n$ vertices.
	
	
	\begin{prop}\label{prop:correspondence}
		Let $l$ be a link-irregular labeling of $K_n$ using the labels ``red" and ``blue". Then the subgraph $G_{\text{red}}$ consisting of all red edges in $K_n$ is a cut-irregular graph on $n$ vertices.
		
		Conversely, if $G$ is a cut-irregular graph on $n$ vertices, then assigning the label ``red'' to the edges of a subgraph of $K_n$ isomorphic to $G$, and labeling all remaining edges ``blue'', yields a link-irregular 2-labeling of $K_n$.
	\end{prop}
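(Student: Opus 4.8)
The plan is to establish the two directions of Proposition~\ref{prop:correspondence} by directly unpacking the structural observation made just before the definition: the labeled link of a vertex $v$ in an edge-labeled $K_n$ is a labeled copy of $K_{n-1}$ on the vertex set $V(K_n) \setminus \{v\}$, and with only two labels this labeled $K_{n-1}$ is completely determined by which edges are red, i.e., by the graph $G_{\text{red}} - v$. So the key claim I would prove as a lemma-within-the-proof is that for any two distinct vertices $u,v$, the labeled links $L_l(u)$ and $L_l(v)$ are isomorphic as labeled graphs if and only if $G_{\text{red}} - u \cong G_{\text{red}} - v$ as unlabeled graphs. Once this equivalence is in hand, both directions follow by contraposition almost immediately.

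For the forward direction, I would start from a link-irregular $2$-labeling $l$ and suppose toward a contradiction that $G_{\text{red}}$ is not cut-irregular, so there are distinct $u,v$ with $G_{\text{red}} - u \cong G_{\text{red}} - v$. The plan is to promote such an unlabeled isomorphism to a labeled isomorphism of the links: an isomorphism $\phi : G_{\text{red}} - u \to G_{\text{red}} - v$ maps red edges to red edges bijectively, and since every non-edge of $G_{\text{red}} - u$ is a blue edge of $K_{n-1}$ and $\phi$ is a bijection of the full vertex sets (both equal to $V(K_n)$ minus one vertex, with $n-1$ vertices each), it must map blue edges to blue edges as well. Hence $\phi$ is a label-preserving isomorphism $L_l(u) \to L_l(v)$, contradicting link-irregularity. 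I would note explicitly that this step uses the fact that the underlying graph is complete, so that ``not red'' is the same as ``blue'' and no third possibility exists; this is exactly where the restriction to $K_n$ and to two labels is essential.

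For the converse, I would take a cut-irregular $G$ on $n$ vertices, fix an isomorphic copy inside $K_n$, color its edges red and all others blue, and again use the equivalence: if two links $L_l(u), L_l(v)$ were labeled-isomorphic, the label-preserving isomorphism would in particular be an unlabeled isomorphism $G_{\text{red}} - u \to G_{\text{red}} - v$, contradicting cut-irregularity of $G \cong G_{\text{red}}$. Thus all labeled links are pairwise non-isomorphic and $l$ is link-irregular.

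The main obstacle, and the step I would write most carefully, is the ``only if'' half of the internal equivalence, namely that a labeled isomorphism of links forces an unlabeled isomorphism of vertex-deleted subgraphs and vice versa. The subtle point is the direction used in the forward part: upgrading a bare unlabeled isomorphism of $G_{\text{red}} - u$ and $G_{\text{red}} - v$ to a label-preserving one. I expect this to be routine but it genuinely relies on completeness, since in a general host graph an isomorphism of the red subgraphs need not respect the blue edges (non-edges of the red graph could be either blue edges or true non-edges of the host). I would make sure the write-up flags that both links have the same vertex-count $n-1$ and the same underlying complete graph $K_{n-1}$, so that preservation of red edges is equivalent to preservation of blue edges, closing the argument.
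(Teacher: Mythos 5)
Your proposal is correct and follows essentially the same route as the paper: both directions reduce to the observation that, because the host graph is complete and only two labels are used, an unlabeled isomorphism of $G_{\text{red}}-u$ and $G_{\text{red}}-v$ upgrades to a label-preserving isomorphism of $L_l(u)$ and $L_l(v)$ (the paper phrases this as ``adding the blue edges'' / ``removing the blue edges''). Your write-up is somewhat more explicit about why completeness is needed for this upgrade, but the argument is the same.
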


	\begin{proof}
		We first prove the first statement. Suppose $l$ is a link-irregular labeling of $K_n$. If $v$ is a vertex of $K_n$, $L(v) = K_n-v$. Suppose we have vertices $u$ and $v$ of $K_n$ such that $G_{red}-u \cong G_{red}-v$. Then adding all the blue edges to $G_{red}-u$ and $G_{red}-v$, we obtain that labeled $K_n-u$  is isomorphic to the labeled $K_n-v$, that is $L_l(u)\cong L_l(v)$. Since $l$ is a link-irregular labeling, this cannot hold, so $G_{red}-u \ncong G_{red}-v$, and $G_{red}$ is cut-irregular.\\
		Now we prove the second statement. Suppose $G$ is a cut-irregular graph. Label all edges of $G$ red and fill in in any non-edges of $G$ with a blue edge to obtain a labeling $l$ of $K_n$. If $L_l(u) \cong L_l(v)$ in $K_n$ then $K_n-u \cong K_n-v$. Then, by removing the blue edges, we obtain $G-u \cong G-v$. But, since $G$ is cut-irregular, this cannot be the case; hence, the labeling $l$ must be a link-irregular 2-labeling.
		
		\begin{figure}[H]
			\centering
			\begin{tikzpicture}[scale=1.1, every node/.style={circle, fill=black, inner sep=1pt}]
				\begin{scope}[xshift=0cm]
					\foreach \i in {1,...,6} {
						\node (p\i) at ({60*(\i-1)}:1cm) {};
					}
					
					\draw[blue, thick] (p1) -- (p2);
					\draw[blue, thick] (p1) -- (p5);
					\draw[blue, thick] (p1) -- (p4);
					\draw[blue, thick] (p1) -- (p3);
					\draw[blue, thick] (p2) -- (p6);
					\draw[blue, thick] (p2) -- (p5);
					\draw[blue, thick] (p2) -- (p4);
					\draw[blue, thick] (p4) -- (p6);
					
					\draw[red] (p1) -- (p6);
					\draw[red] (p2) -- (p3);
					\draw[red] (p3) -- (p6);
					\draw[red] (p3) -- (p5);
					\draw[red] (p3) -- (p4);
					\draw[red] (p4) -- (p5);
					\draw[red] (p5) -- (p6);

					\node[draw=none, fill=none] at (0,-1.5) {$K_5$};
				\end{scope}
				
				\begin{scope}[xshift=6cm]
					\foreach \i in {1,...,6} {
						\node (p\i) at ({60*(\i-1)}:1cm) {};
					}
					
					\draw[red] (p1) -- (p6);
					\draw[red] (p2) -- (p3);
					\draw[red] (p3) -- (p6);
					\draw[red] (p3) -- (p5);
					\draw[red] (p3) -- (p4);
					\draw[red] (p4) -- (p5);
					\draw[red] (p5) -- (p6);
					
					\node[fill=black] at (p1) {};
					\node[draw=none, fill=none] at (0,-1.3) {$G_{\text{red}}$};
					\node[draw=none, fill=none] at ($(p4)+(-0.3,0)$) {\small $v$};
				\end{scope}
				
				\draw[<->, thick] (2.5,0) -- (3.5,0);
				
				\begin{scope}[yshift=-3.5cm]
					\node (a) at (-.5,0) {};
					\node (b) at (.5,0) {};
					\node (c) at (.5,1) {};
					\node (d) at (-.5,1) {};
					\node (e) at (.5+1.732*.5,.5) {};

					\draw[red] (a) -- (b);
					\draw[red] (a) -- (d);
					\draw[red] (b) -- (e);
					\draw[red] (b) -- (d);
					\draw[red] (d) -- (c);
					
					\draw[blue] (e) -- (c);
					\draw[blue] (e) -- (d);
					\draw[blue] (e) -- (a);
					\draw[blue] (b) -- (c);
					\draw[blue] (a) -- (c);

					\node[draw=none, fill=none] at (0.75,-0.8) {labeled $L(v)$};
				\end{scope}
				
				\begin{scope}[xshift=6cm, yshift=-3.5cm]
					
					\node (a) at (-.5,0) {};
					\node (b) at (.5,0) {};
					\node (c) at (.5,1) {};
					\node (d) at (-.5,1) {};
					\node (e) at (.5+1.732*.5,.5) {};

					\draw[red] (a) -- (b);
					\draw[red] (a) -- (d);
					\draw[red] (b) -- (e);
					\draw[red] (b) -- (d);
					\draw[red] (d) -- (c);

					\node[draw=none, fill=none] at (0.75,-0.8) {$G_{\text{red}} - v$};
				\end{scope}
				
				\draw[<->, thick] (2.5,-3.2) -- (3.5,-3.2);
				
			\end{tikzpicture}
			\caption{Transformation between a labeled complete graph $K_6$ and its link graph $L(v)$, compared to a pencil-drawn $G_{\text{red}}$ and $G_{\text{red}} - v$.}
		\end{figure}
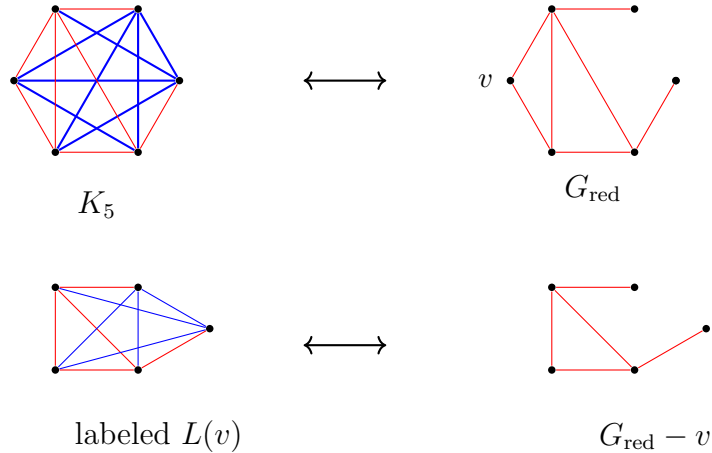      
	\end{proof}
	
	We have thus reduced the problem of determining whether $K_n$ admits a link-irregular 2-labeling to the problem of finding a cut-irregular graph on $n$ vertices. As we now show, such a graph exists if and only if $n \geq 6$. The proof is analogous to the one presented in~\cite{akbar_book}.

	\begin{theorem} \label{thm:existence}
		There exists a cut-irregular graph on $n$ vertices if and only if $n \geq 6$.
	\end{theorem}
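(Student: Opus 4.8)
The plan is to prove both directions directly from the cut-irregular property, following the structure of the analogous existence proof in \cite{akbar_book}. Two elementary facts will be used throughout. First, deleting a vertex changes the edge count by its degree, so $e(G-v)=e(G)-d(v)$; consequently two vertices whose deletions are isomorphic must have equal degree, and the only possible obstructions to cut-irregularity come from pairs $u,v$ with $d(u)=d(v)$. Second, cut-irregularity is preserved under complementation: since $\overline{G-v}=\overline{G}-v$, we have $G-u\cong G-v$ if and only if $\overline{G}-u\cong\overline{G}-v$. This symmetry will halve the casework in one direction and let me derive one inductive operation from another.

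For necessity (no cut-irregular graph on $n\le 5$ vertices) I would argue by the value of $n$. The cases $n\le 3$ are immediate. For $n=4$ there is a clean counting argument: there is exactly one graph on three vertices for each of the edge counts $0,1,2,3$, so cut-irregularity would force the four deletions to realize all four edge counts, i.e.\ the degrees $d_1,\dots,d_4$ would be four consecutive integers; since $\sum d_i=2e(G)$ and $d_i=e(G)-e(G-v_i)$, this forces $e(G)=3$ and degree set $\{0,1,2,3\}$, impossible because a vertex of degree $3$ and a vertex of degree $0$ cannot coexist on four vertices. For $n=5$ the extreme edge counts no longer determine a deletion up to isomorphism (indeed one can find $5$-vertex graphs in which two equal-degree vertices have non-isomorphic deletions), so I would fall back on a finite case analysis organized by degree sequence: every $5$-vertex graph has a repeated degree by the pigeonhole fact above, and for each graphical degree sequence on five vertices (grouped into complementary pairs to halve the work) one checks that some equal-degree pair $u,v$ satisfies $G-u\cong G-v$. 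This step is routine but tedious, and it is where no slick shortcut seems available.

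For sufficiency I would supply an explicit base case together with two vertex-adding operations. As base case take the $6$-vertex graph $G_6$ with edge set $\{16,23,34,35,36,45,56\}$; its degree sequence is $(1,1,2,3,3,4)$, the only repeated degrees are $\{1,2\}$ and $\{5,6\}$, and checking just these pairs (e.g.\ $G_6-1$ and $G_6-2$ have degree sequences $(1,2,2,3,4)$ and $(1,2,3,3,3)$, and likewise for $5,6$) shows $G_6$ is cut-irregular; all other deletions are separated by distinct edge counts. Crucially $G_6$ has minimum degree $1$ and maximum degree $4$, hence neither an isolated nor a universal vertex. The engine is two claims. \emph{Operation A}: if $H$ is cut-irregular with no isolated vertex, then $H+K_1$ (adding an isolated vertex $w$) is cut-irregular; for $u,v\in V(H)$ one has $(H-u)+K_1\cong(H-v)+K_1$ iff $H-u\cong H-v$ (an isolated vertex cancels), which is excluded, while $H'-w=H$ has $e(H)$ edges and each $H'-v$ has $e(H)-d(v)<e(H)$ edges since $d(v)\ge 1$, distinguishing $w$'s deletion. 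Note $H+K_1$ then has an isolated but no universal vertex. \emph{Operation B} is the complementary statement: if $H$ has no universal vertex, then $H\vee K_1$ is cut-irregular (universal but no isolated vertex), which follows by applying Operation A to $\overline{H}$ via $\overline{H\vee K_1}=\overline{H}+K_1$. Starting from $G_6$ and alternating the two operations produces cut-irregular graphs on all $n\ge 6$: from a graph with no isolated vertex apply A, from one with no universal vertex apply B, and since for $n\ge 2$ a graph cannot have both an isolated and a universal vertex, exactly the hypothesis needed for the next step holds, so the alternation never stalls.

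The main obstacle I anticipate is twofold. In the induction the delicate points are the cancellation $(H-u)+K_1\cong(H-v)+K_1\Rightarrow H-u\cong H-v$, which must be argued carefully by separating isolated from non-isolated vertices (it holds regardless of whether $H$ has isolated vertices, the ``no isolated vertex'' hypothesis being needed only to distinguish $H'-w$), together with the bookkeeping guaranteeing that the isolated/universal hypotheses persist so the alternation continues. The more laborious obstacle is the $n=5$ necessity check, which, unlike $n=4$, admits no one-line counting argument and requires a complete, if mechanical, pass through the degree sequences on five vertices.
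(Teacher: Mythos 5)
Your proposal is correct, and its inductive engine is genuinely different from the paper's. The paper alternates between two asymmetric operations --- joining a universal vertex when passing to odd order, and attaching a pendant vertex to a minimum-degree vertex when passing to even order --- and must maintain the invariant that $G_n$ has exactly one vertex of degree $1$ (resp.\ $n-1$) and none of the other, with the even case requiring a somewhat delicate argument (isolated vertex in $G_n - z$ versus connectedness of $G_n - w$, plus the unique degree-$1$ vertex). You instead alternate ``add an isolated vertex'' with ``add a universal vertex,'' and you obtain the second operation from the first for free via the complementation identity $\overline{H \vee K_1} = \overline{H} + K_1$ together with the fact that cut-irregularity is complementation-invariant. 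This buys a cleaner verification: Operation A needs only the component-multiset cancellation $(H-u)+K_1 \cong (H-v)+K_1 \Rightarrow H-u \cong H-v$ and the edge count $e(H'-w) = e(H) > e(H) - d(v)$, and the bookkeeping reduces to the observation that a graph on at least two vertices cannot have both an isolated and a universal vertex, so the alternation never stalls. Your base graph is isomorphic to the paper's $G_6$, and your degree-sequence check of the two equal-degree pairs (all other pairs being separated by edge counts) suffices. On the necessity side both you and the paper ultimately defer to a finite check; your counting argument for $n=4$ (the four deletions would have to realize all four edge counts on three vertices, forcing degree set $\{0,1,2,3\}$, which is not graphical on four vertices) is a nice addition the paper does not give, while $n=5$ remains a mechanical case analysis in both treatments. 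One pedantic remark applying equally to the paper: for $n=1$ the cut-irregularity condition is vacuous, so the ``only if'' direction should be read as starting from $n \ge 2$.
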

	
	\begin{proof}
		That there is no cut-irregular graph on fewer than six vertices is readily verified by examining each graph on five or fewer vertices. Below is a cut-irregular graph on six vertices. Call this graph $G_6$ This is our base step. 
		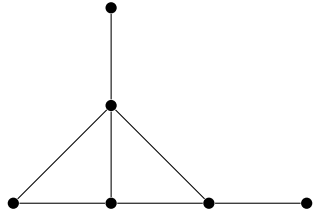
\begin{figure}[H]
			\centering
			\begin{tikzpicture}[scale=1.3, every node/.style={circle, fill=black, inner sep=1.5pt}]
				\node (A) at (0,0) {};
				\node (B) at (1,0) {};
				\node (C) at (2,0) {};
				\node (D) at (3,0) {};
				\node (E) at (1,1) {};
				\node (F) at (1,2) {};
				
				\draw (A) -- (B);
				\draw (B) -- (C);
				\draw (C) -- (D);
				\draw (A) -- (E);
				\draw (B) -- (E);
				\draw (C) -- (E);
				\draw (F) -- (E);
			\end{tikzpicture}
			\caption{A cut-irregular graph on 6 vertices.}
			\label{cut_irr}
		\end{figure}
		
		From this, we will construct an infinite set $\lbrace G_6, G_7, G_8, \cdots \rbrace$ such that $G_n$, $n \geq 6$, is a cut-irregular graph on $n$ vertices. For $n > 6$, we construct $G_n$ as follows
		\begin{itemize}
			\item If $n$ is odd, we put $G_n = G_{n-1} \lor K_1$. That is, $G_n$ is the join of $G_{n-1}$ and $K_1$. 
			\item If $n$ is even, we obtain $G_n$ by adding a new vertex and connecting it to a vertex of minimum degree in $G_{n-1}$. 
		\end{itemize}
		We claim that each $G_n$ is cut-irregular. We first notice that $G_6$ has no vertex of degree 5. Hence $G_7$ has only one vertex of degree 6. $G_7$ also has no vertex of degree 1 since $G_6$ has no isolated vertices and joining $K_1$ to $G_6$ increases the degree of each vertex by 1. Then $G_8$ has only one vertex of degree 1 and no vertex of degree 7. More generally, we have:
		\begin{itemize}
			\item If $n$ is odd, then $G_n$ has exactly one vertex of degree $n-1$ and no vertex of degree 1.
			\item If $n$ is even, then $G_n$ has exactly one vertex of degree 1 and no vertex of degree $n-1$.
		\end{itemize}
		Suppose that $G_{n-1}$ is cut-irregular for $n-1 \geq 6$. Then we examine two cases.\\\\
		\textit{Case 1. $n$ is odd.} Then, $n-1$ is even, and by construction,  $G_{n-1}$ has exactly one vertex of degree 1 and no vertex of degree $n-1$. Then, to obtain $G_n$, we join a new vertex $w$ to all the vertices of $G_{n-1}$. Suppose for contradiction that $G_n - u \cong G_n - v$ for some $u, v \in V(G_n)$. First, consider the case where both $u$ and $v$ are in $G_{n-1}$. Then, We have $G_n - u = (G_{n-1}\lor K_1) - u \cong (G_{n-1}-u) \lor K_1$. Likewise, $G_n - v \cong (G_{n-1}-v)\lor K_1$. Therefore, $G_n - u \cong G_n - v$ implies $(G_{n-1} - u) \lor K_1 \cong (G_{n-1} - v) \lor K_1$. Removing the common $K_1$ from both sides, we get $G_{n-1} - u \cong G_{n-1} - v$, contradicting the assumption that $G_{n-1}$ is cut-irregular. Hence, $G_n - u \not\cong G_n - v$ when $u, v \in V(G_{n-1})$.\\
		Now consider the case where $v = w$. Observe that if $G_n - u \cong G_n - w$, then $u$ and $w$ must have the same degree in $G_n$; otherwise, the graphs would have different numbers of edges and could not be isomorphic. Since $w$ is the only vertex of degree $n - 1$ in $G_n$, no other vertex can have the same degree, so $G_n - u \not\cong G_n - w$ for any $u \in V(G_{n-1})$. Therefore, in all cases, $G_n - u \not\cong G_n - v$ for distinct $u, v \in V(G_n)$, and thus $G_n$ is cut-irregular in this case.\\

		\textit{Case 2. $n$ is even.} Then $n - 1$ is odd, and by construction, $G_{n-1}$ has exactly one vertex of degree $n - 1$ and no vertex of degree 1. We obtain $G_n$ by adding a new vertex $w$ and connecting it to a vertex $z$ of minimum degree in $G_{n-1}$, making $w$ the only vertex of degree 1 in $G_n$. Let $u$ and $v$ be arbitrary vertices in $G_{n-1} \setminus \{z\}$. We aim to show that $G_n - u \ncong G_n - v$. First, observe that $w$ remains the unique vertex of degree 1 in both $G_n - u$ and $G_n - v$, so removing $w$ from both graphs gives
		\[
		(G_n - u) - w \cong G_{n-1} - u \quad \text{and} \quad (G_n - v) - w \cong G_{n-1} - v.
		\]
		Therefore, if $G_n - u \cong G_n - v$, then $(G_n - u) - w \cong (G_n - v) - w$, and $G_{n-1} - u \cong G_{n-1} - v$, which contradicts the assumption that $G_{n-1}$ is cut-irregular. Next, we claim that $G_n - z \ncong G_n - w$. Indeed, $G_n - z$ contains an isolated vertex (namely $w$), while $G_n - w$ is connected. Similarly, $G_n - w \ncong G_n - u$ for any $u \in V(G_{n-1})$ since $w$ is the only vertex of degree 1, and $G_n - u$ has no vertex of degree 1. Finally, to show that $G_n - z \ncong G_n - u$, note that $G_n - z$ contains an isolated vertex ($w$), while $G_n - u$ does not, because $w$ is only adjacent to $z$ and $z \ne u$. Hence, in all cases, $G_n - u \ncong G_n - v$ for distinct $u, v \in V(G_n)$, and so $G_n$ is cut-irregular in this case as well. This completes the proof.

		\begin{figure}[H]
			\centering
			\begin{tikzpicture}[scale=1.1, every node/.style={circle, fill=black, inner sep=1.2pt}]
				
				\begin{scope}[xshift=1cm]
					\foreach \i in {1,...,6} {
						\node (b\i) at ({60*(\i-1)+90}:1) {};
					}
					
					
					\draw (b1) -- (b2);
					\draw (b2) -- (b3) -- (b4) -- (b5) -- (b2);
					\draw (b5) -- (b6);
					\draw (b2) -- (b4);
					
					\node[draw=none, fill=none] at (0,-1.5) {$G_6$};
				\end{scope}
				
				\begin{scope}[xshift=5cm]
					\foreach \i in {1,...,7} {
						\node (b\i) at ({360/7*(\i-1)+90}:1) {};
					}
					
					\draw (b1) -- (b2);
					\draw (b2) -- (b3) -- (b4) -- (b5) -- (b2);
					\draw (b5) -- (b6);
					\draw (b2) -- (b4);
					\draw (b7) -- (b1);
					\draw (b7) -- (b2);
					\draw (b7) -- (b3);
					\draw (b7) -- (b4);
					\draw (b7) -- (b5);
					\draw (b7) -- (b6);
					
					\node[draw=none, fill=none] at (0,-1.5) {$G_7$};
				\end{scope}
				
				\begin{scope}[xshift=9cm]
					\foreach \i in {1,...,7} {
						\node (b\i) at ({360/7*(\i-1)+90}:1) {};
					}
					\node (c8) at (0,2) {};  
					
					\draw (b1) -- (b2);
					\draw (b2) -- (b3) -- (b4) -- (b5) -- (b2);
					\draw (b5) -- (b6);
					\draw (b2) -- (b4);
					\draw (b7) -- (b1);
					\draw (b7) -- (b2);
					\draw (b7) -- (b3);
					\draw (b7) -- (b4);
					\draw (b7) -- (b5);
					\draw (b7) -- (b6);
					\draw (b1) -- (c8);
					
					\node[draw=none, fill=none] at (0,-1.5) {$G_8$};
				\end{scope}
				
			\end{tikzpicture}
			\caption{The graphs $G_6, G_7,$ and $G_8$.}
			\label{G6G7G8}
		\end{figure}
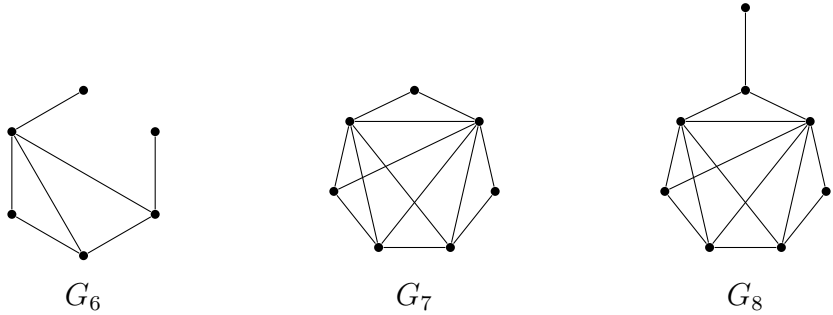 
	\end{proof}
	
	
	
	
	We now have the following result.
	
	\begin{theorem}
		The link-irregular numbers of the $K_n$ are as follows
		\begin{enumerate}
			\item $\eta(K_1) = 0$ and $\eta(K_2) = \infty$
			\item $\eta(K_n) = 3$ if $n = 3, 4, 5$
			\item $\eta(K_n) = 2$ if $n \geq 6$
		\end{enumerate}
	\end{theorem}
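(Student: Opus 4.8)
The plan is to handle the three regimes separately and reduce each to tools already in hand: the observation that $\eta(G)=1$ exactly when $G$ is link-irregular, and the pairing of Proposition~\ref{prop:correspondence} with Theorem~\ref{thm:existence}, which together say that $K_n$ admits a link-irregular $2$-labeling if and only if a cut-irregular graph on $n$ vertices exists, i.e. if and only if $n\ge 6$. First I would dispose of the two degenerate cases. For $K_1$ there are no edges and a single vertex, so link-irregularity holds vacuously and no labels are needed, giving $\eta(K_1)=0$. For $K_2$ each vertex has a single neighbor, so both links are the one-vertex graph $\bar{K}_1$, which carries no edges; since the edge $uv$ lies in neither link, no labeling can separate them, the necessary condition ``at most one vertex has $L(x)\cong\bar{K}_n$'' fails, and $\eta(K_2)=\infty$.

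For the lower bounds in the remaining cases I would invoke two facts valid for all $n\ge 2$: every vertex of $K_n$ has link $K_{n-1}$, so $K_n$ is link-regular and hence $\eta(K_n)\ne 1$; and by Proposition~\ref{prop:correspondence} a link-irregular $2$-labeling of $K_n$ exists precisely when there is a cut-irregular graph on $n$ vertices. Theorem~\ref{thm:existence} then yields $\eta(K_n)\ge 3$ for $n\in\{3,4,5\}$, since no cut-irregular graph exists on fewer than six vertices, and $\eta(K_n)\le 2$ for $n\ge 6$; combined with link-regularity this already establishes $\eta(K_n)=2$ for every $n\ge 6$, settling part (3).

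It then remains to prove the matching upper bound $\eta(K_n)\le 3$ for $n\in\{3,4,5\}$ by exhibiting explicit $3$-labelings. For $K_3$ each link is a single labeled edge, so the three edges are forced to carry three distinct labels, which simultaneously gives $\eta(K_3)=3$. For $K_4$ and $K_5$ the links are labeled copies of $K_3$ and $K_4$, and the efficient certificate of pairwise non-isomorphism is the isomorphism invariant $(c_1,c_2,c_3)$ recording how many edges of each label occur in a given link: distinct invariant triples force non-isomorphic labeled links. A concrete labeling of $K_4$, for instance $12,24,34\mapsto 1$, $13,23\mapsto 2$, $14\mapsto 3$, yields the four distinct triangle-label multisets $\{1,1,2\},\{1,2,3\},\{1,1,3\},\{1,2,2\}$; for $K_5$ I would record one mildly asymmetric labeling and check that its five invariant triples are all different.

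The main obstacle is the $K_5$ construction. Here the five invariant triples must be computed honestly from one consistent edge labeling, noting that each edge contributes to exactly three of the five links, so the triples are not free parameters. Highly symmetric labelings (for example, a monochromatic star from one vertex together with a two-coloured $K_4$) tend to leave two ``cycle-type'' vertices with isomorphic links, so a deliberately asymmetric assignment is required to drive all five invariants apart. This is the only step demanding genuine case checking; every other assertion follows formally from link-regularity of $K_n$ and the two cited results.
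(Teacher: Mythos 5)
Your proposal follows essentially the same route as the paper: the degenerate cases by direct inspection, the lower bound $\eta(K_n)\ge 3$ for $n\in\{3,4,5\}$ and the equality $\eta(K_n)=2$ for $n\ge 6$ via the correspondence of Proposition~\ref{prop:correspondence} with Theorem~\ref{thm:existence} (you are in fact slightly more careful than the paper in explicitly ruling out $\eta(K_n)=1$ via link-regularity), and the upper bound for $n=3,4,5$ by exhibiting explicit $3$-labelings, which is exactly what the paper does with its figures. Your $K_3$ and $K_4$ constructions are correct, and your label-multiset invariant is a sound certificate of non-isomorphism. The one outstanding item is that you never actually produce the $K_5$ labeling; this is not a flaw in the approach, since such a labeling exists and your invariant suffices to verify it. For instance, on vertices $v_1,\dots,v_5$ label $v_1v_5$ with $3$, label $v_1v_3$, $v_1v_4$, $v_4v_5$ with $2$, and the remaining six edges with $1$; the five links then have label-count triples $(5,1,0)$, $(2,3,1)$, $(3,2,1)$, $(4,1,1)$, $(4,2,0)$, all distinct, which closes the case.
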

	
	\begin{proof}
		It is straightforward to verify that $\eta(K_1) = 0$ and that $\eta(K_2) = \infty$.
		That $\eta(K_n) = 2$ if $n \geq 6$ follows directly from Theorem~\ref{thm:existence} and Proposition~\ref{prop:correspondence}. \\It remains to show that $\eta(K_n) = 3$ if $n = 3, 4, 5$.
		By Theorem~\ref{thm:existence}, there is no cut-irregular graph on 3, 4, or 5 vertices, so $\eta(K_n) \geq 3$ for $n=3,4,5$. In Figure \ref{complete_graphs}, the link-irregular 3-labelings of $K_3, K_4,$ and $K_5$ are shown. So, $\eta(K_n) \leq 3$. Hence, $\eta(K_n)=3$ for $n=3,4,5$.
		
		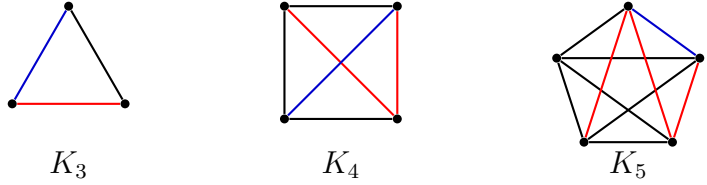
\begin{figure}[H]
			\centering
			\begin{tabular}{c@{\hspace{2cm}}c@{\hspace{2cm}}c}
				\begin{tikzpicture}[scale=1.5, baseline=(current bounding box.north)]
					\node[circle, fill=black, inner sep=1.2pt] (a) at (0,0) {};
					\node[circle, fill=black, inner sep=1.2pt] (b) at (1,0) {};
					\node[circle, fill=black, inner sep=1.2pt] (c) at (0.5,0.866) {};
					\draw[red, thick] (a) -- (b);
					\draw[black, thick] (b) -- (c);
					\draw[blue!80!black, thick] (c) -- (a);
				\end{tikzpicture}
				&
				\begin{tikzpicture}[scale=1.5, baseline=(current bounding box.north)]
					\node[circle, fill=black, inner sep=1.2pt] (a) at (0,0) {};
					\node[circle, fill=black, inner sep=1.2pt] (b) at (1,0) {};
					\node[circle, fill=black, inner sep=1.2pt] (c) at (1,1) {};
					\node[circle, fill=black, inner sep=1.2pt] (d) at (0,1) {};
					\draw[black, thick] (a) -- (b);
					\draw[black, thick] (a) -- (d);
					\draw[black, thick] (c) -- (d);
					\draw[red, thick] (b) -- (c);
					\draw[red, thick] (b) -- (d);
					\draw[blue!80!black, thick] (a) -- (c);
				\end{tikzpicture}
				&
				\begin{tikzpicture}[scale=1, baseline=(current bounding box.north)]
					\foreach \i in {1,...,5} {
						\node[circle, fill=black, inner sep=1.2pt] (p\i) at ({90+72*(\i-1)}:1cm) {};
					}
					\draw[black, thick] (p1) -- (p2);
					\draw[black, thick] (p2) -- (p3);
					\draw[black, thick] (p2) -- (p4);
					\draw[black, thick] (p2) -- (p5);
					\draw[black, thick] (p3) -- (p4);
					\draw[black, thick] (p3) -- (p5);
					\draw[red, thick] (p1) -- (p3);
					\draw[red, thick] (p1) -- (p4);
					\draw[red, thick] (p4) -- (p5);
					\draw[blue!80!black, thick] (p5) -- (p1);
				\end{tikzpicture}
				\\[2pt]
				$K_3$ & $K_4$ & $K_5$
			\end{tabular}
			\caption{Examples of complete graphs $K_3$, $K_4$, and $K_5$ with selected colored edges.}
			\label{complete_graphs}
		\end{figure}
	\end{proof}
	
	Note that $K_n$ is link-regular for all $n \geq 2$, as the link of every vertex is isomorphic to $K_{n-1}$. Thus, $K_n$ provides an example of a link-regular graph that admits a link-irregular labeling.
	

	
	
	
	\begin{obs}
		There exist link-regular graphs that also admit link-irregular labelings.
	\end{obs}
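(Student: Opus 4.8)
The plan is to exhibit a single concrete family of graphs that is simultaneously link-regular and admits a link-irregular labeling, rather than to prove anything new from scratch. The complete graphs $K_n$ are the obvious witness, since both halves of the required property have already been verified for them earlier in the paper. So the entire proof reduces to assembling two facts established above into a single statement.

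First I would recall the link-regularity of $K_n$. For every vertex $v \in V(K_n)$, the link $L(v) = K_n[N(v)]$ is induced on the remaining $n-1$ mutually adjacent vertices, hence $L(v) \cong K_{n-1}$ for \emph{every} $v$. Thus $L(u) \cong L(v)$ for all $u, v \in V(K_n)$, which is precisely the definition of link-regularity; this holds for all $n \geq 2$, as the remark immediately preceding the observation already points out.

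Next I would invoke the computation of $\eta(K_n)$: by the preceding theorem we have $\eta(K_n) = 2$ for $n \geq 6$ and $\eta(K_n) = 3$ for $n \in \{3,4,5\}$, so in particular $\eta(K_n) < \infty$ for every $n \geq 3$. By definition of $\eta$, a finite value means $K_n$ admits a link-irregular labeling. I would therefore fix any $n \geq 3$ (for definiteness one could take $n = 6$, where the clean $2$-labeling from Proposition~\ref{prop:correspondence} applies) and conclude that this single graph is both link-regular and labeled link-irregular, establishing the observation. The only point requiring any care is excluding $n = 2$, where $\eta(K_2) = \infty$ shows no such labeling exists, so the witness must be chosen with $n \geq 3$.

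There is essentially no genuine obstacle here: the substantive work was carried out in establishing the values of $\eta(K_n)$, and the observation is a direct corollary combining that with the elementary link-regularity of $K_n$. The ``hard part,'' such as it is, is merely stylistic---making explicit that the two already-proved properties (all links isomorphic to $K_{n-1}$, and $\eta(K_n)$ finite for $n\ge 3$) coexist in the same graph, which is what the observation asserts.
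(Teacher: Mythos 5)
Your proposal is correct and matches the paper's own justification, which is simply the remark preceding the observation: $K_n$ is link-regular since every link is $K_{n-1}$, and the preceding theorem gives $\eta(K_n)<\infty$ for $n\ge 3$. Your explicit exclusion of $n=2$ is a small but sensible refinement of the paper's phrasing, which loosely says $K_n$ is link-regular "for all $n\ge 2$" before citing it as the witness.
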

	
	This highlights that link-regularity and the existence of link-irregular labelings are distinct properties. While $K_n$ is link-regular and admits a link-irregular labeling, not all link-regular graphs share this property. For instance, cycles $C_n$ with $n \geq 4$ are link-irregular but do not admit any link-irregular labeling.
	This demonstrates that while link-irregularity generalizes the classical notion of irregularity, link-irregular labelings further expand the link-irregularity.\\
	
	At this point, we answer the question: for which values of $n$ does there exist a graph $H_n$ such that $\eta(H_n) = n$? Using the facts that $\eta(K_3) = 3$ and $\eta(G) = 1$ for any link-irregular graph $G$, we show that such a graph $H_n$ exists for every integer $n \geq 1$. Note that while we claim that such a graph exists, we are not claiming that there is only one such graph.
	
	\begin{theorem}
		For $n=1,2,3, \cdots$, there exists a graph $H_n$ such that $\eta(H_n)=n$.
	\end{theorem}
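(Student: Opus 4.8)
The plan is to realize each target value $n$ by assembling $H_n$ as a disjoint union of a few elementary gadgets, and to pin down $\eta(H_n)$ by \emph{counting the vertices whose link is a single edge} $K_2$. The key structural fact I would record first is that disjoint unions restrict cleanly: if $l$ is a link-irregular labeling of $G + H$, then (since no edge runs between the two parts) each link is computed inside its own component, so $l$ restricts to link-irregular labelings of $G$ and of $H$; hence $\eta(G+H)\ge \max\{\eta(G),\eta(H)\}$. More usefully, any two vertices (in either component) whose links are both isomorphic to $K_2$ must receive \emph{distinct} labels on the corresponding edges, because two labeled copies of $K_2$ are isomorphic precisely when they carry the same label. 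Thus the number of ``single-edge-link'' vertices is a transparent lower bound on $\eta$, and this is the quantity I will control.

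Next I would establish the base multiplier. In $mK_3$ every vertex has link $K_2$ (the opposite edge of its triangle); these $3m$ edges are distinct and each is the link of exactly one vertex, so all $3m$ labels must differ, giving $\eta(mK_3)=3m$ (the upper bound being the all-distinct labeling). This settles every $n\equiv 0\pmod 3$, and together with $\eta(G)=1$ for a link-irregular graph $G$ (handling $n=1$) it anchors the construction. To reach the remaining residues I would introduce two small gadgets whose single-edge links are controlled: the \emph{paw} $P$ (a triangle $abc$ with a pendant $d$ at $a$) has exactly two vertices of link $K_2$, namely $b$ and $c$, while its other links $K_2+K_1$ (at $a$) and $K_1$ (at $d$) are of unique type, and one checks $\eta(P)=2$; and a gadget $B$ with exactly one single-edge link, for instance a vertex $v$ joined to two vertices of a $K_4$, so that $v$'s link is an edge while the four clique vertices have links of type paw or $K_3$, none of which is a single edge.

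I would then compute $\eta(mK_3 + P)=3m+2$ and $\eta(mK_3 + B)=3m+1$. In each case the lower bound is exactly the single-edge count ($3m+2$ and $3m+1$ forced-distinct $K_2$-links), and the upper bound comes from assigning those many distinct labels to the single-edge-link edges and then filling in the remaining edges so as to separate the finitely many surviving same-type link pairs inside $P$ or $B$ (the two paws and two $K_3$'s in $B$, say), which costs no new label once $m\ge 1$. Combined with the small cases $n=1$ (link-irregular graph), $n=2$ (the paw, i.e.\ $m=0$ of the ``$+2$'' family), and $n=3$ ($K_3$), this produces a graph with $\eta=n$ for every $n\ge 1$.

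The main obstacle I anticipate is the upper-bound bookkeeping rather than the lower bounds: I must verify that the non-single-edge link pairs inside each gadget (two copies of the paw, two copies of $K_3$, etc.) can always be rendered pairwise non-isomorphic within the already-forced label budget, without accidentally making two previously distinct links isomorphic or creating a second vertex of some ``small'' link type ($K_0,K_1,2K_1,K_2$); ensuring at most one vertex of each such type is exactly what guarantees the gadget is link-irregular-labelable in the first place. As a cross-check and shortcut, I would note that the wheel theorem already gives $\eta(W_{i_n})=n$ for every $n\ge 3$ upon choosing $i_n$ in the appropriate interval with $i_n\neq 6,8$, so an alternative proof needs the elementary gadget argument only for $n=1$ and $n=2$.
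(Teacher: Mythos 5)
Your proposal is correct, and its skeleton is the same as the paper's: decompose $n$ by residue mod $3$, use $kK_3$ as the multiplier, and get the lower bound by observing that every vertex whose link is a single edge forces a fresh label (since two labeled copies of $K_2$ are isomorphic exactly when their labels agree, and each relevant edge is the link of exactly one vertex). Where you genuinely diverge is in the gadget that supplies the extra one or two $K_2$-link vertices: the paper appends one copy (for $n\equiv 1$) or two copies (for $n\equiv 2$) of the unique $6$-vertex link-irregular graph $G$, exploiting the fact that a link-irregular component can be labeled with a single new constant label, so no internal bookkeeping is needed; you instead use the paw $P$ and the vertex-attached-to-$K_4$ gadget $B$, which are smaller and do not require knowing that a $6$-vertex link-irregular graph exists, at the cost of having to verify by hand that the surviving same-type link pairs inside the gadget (the two paw-links at $w,x$ and the two $K_3$-links at $y,z$ in $B$) can be separated within the already-forced label budget. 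That verification does go through: for $m\ge 1$ one has at least four labels, and giving $vw$ and $vx$ distinct labels separates the paw-links while choosing the $K_4$ edge labels so that the multisets $\{l(wy),l(xy)\}$ and $\{l(wz),l(xz)\}$ differ separates the triangle-links (indeed two labels already suffice for this internal separation), and all other links are distinguished by vertex count. Your construction also gives smaller witnesses (e.g.\ the $4$-vertex paw for $n=2$ versus the paper's $12$-vertex $2G$), and your closing remark that the wheel theorem independently realizes every $n\ge 3$ is a valid cross-check; the paper's version buys uniformity and brevity by reusing $G$ for both nonzero residues.
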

	
	\begin{proof}
		We construct the graph $H_n$ in three cases based on the congruence of $n$ modulo 3: $n \equiv 0 \pmod{3}$, $n \equiv 1 \pmod{3}$, and $n \equiv 2 \pmod{3}$. Throughout the proof, we denote by $G$ the unique link-irregular graph on six vertices in Figure \ref{fig:unique-6}. We now consider our three cases.\\
		
		\textit{Case 1. $n \equiv 0 \pmod{3}.$} In this case, we can write $n$ in the form $n=3k$. Then consider the graph $H_n=kK_3$. Each edge of $H_n$ is in one and only one link, end each link is a $K_2$. Thus assigning every edge of $H_n$ a unique label provides us with a link-irregular labeling of $H_n$ using $n=3k$ labels. If we use any fewer labels, then two edges must have the same label. Since each edge constitutes the link of one vertex, this means two vertices have isomorphic labeled-links, so a labeling with fewer labels is not link-irregular. Hence $\eta(H_n)=n$.\\
		
		
		
		\textit{Case 2. $n \equiv 1 \pmod{3}.$} In this case, we write $n = 3k + 1$. Consider the graph $H_n = kK_3 + G$, which has $3k + 6$ vertices. This graph includes exactly $3k + 1 = n$ vertices whose links are isomorphic to $K_2$. Since each such link must receive a distinct label, we have $\eta(H_n) \geq n$. To construct a link-irregular labeling with $n$ labels, assign a unique label to each edge of the $kK_3$ component, and assign a single, unused label to all edges of $G$. Note that this is valid because $G$ is link-irregular, and every vertex outside of $G$ has link $K_2$. Although one vertex of $G$ also has $K_2$ as its link, labeling the edges of $G$ with a common, distinct label ensures that all $K_2$ links remain distinguishable. Therefore, we conclude that $\eta(H_n) = n$.\\
		
		\textit{Case 3. $n \equiv 2 \pmod{3}.$} In this case, write $n = 3k + 2$. Let $H_n = kK_3 + 2G$, which has $3k + 12$ vertices. In this graph, exactly $3k + 2 = n$ vertices have $K_2$ as their link, so we have $\eta(H_n) \geq n$. To construct a link-irregular labeling using exactly $n$ labels, we assign a unique label to each edge of the $kK_3$ component. For the two copies of $G$, assign all edges in the first copy the same new label, and do likewise for the second copy using a different new label not used previously. Since $G$ is link-irregular and none of the links are isomorphic, this labeling ensures link-irregularity. Thus, we obtain a link-irregular labeling of $H_n$ using $n = 3k + 2$ labels, so $\eta(H_n) = n$. This completes the proof.
	\end{proof}
	
	In the following propositions, we investigate the link-irregular labeling number of graphs of the form $G \lor K_n$ under certain conditions.

	\begin{prop} \label{prop:expanding}
		Suppose $\Delta(G)<|G|-1$ and $\eta(K_n) \leq \eta(G) < \infty$ for some value $n$. Then $\eta(G \lor K_n) = \eta(G)$.
	\end{prop}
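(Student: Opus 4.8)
The plan is to prove the two inequalities $\eta(G\lor K_n)\le\eta(G)$ and $\eta(G\lor K_n)\ge\eta(G)$ separately, but to base both on one structural observation. Write $H=G\lor K_n$, let $U$ denote the copy of $K_n$ sitting inside $H$, and keep $V(G)$ for the other part. The first thing I would record is the shape of links in $H$: a vertex $x\in V(G)$ has neighborhood $N_G(x)\cup U$, so its link is $L(x)\lor K_n$ (the $G$-link of $x$ joined to the whole clique $U$), whereas a vertex $w\in U$ is adjacent to everything, so its link is $H-w=G\lor K_{n-1}$. The decisive use of $\Delta(G)<|G|-1$ is that every vertex of $U$ has degree $|G|+n-1$ while every vertex of $V(G)$ has degree at most $\Delta(G)+n<|G|+n-1$; hence $U$ is exactly the set of universal (equivalently, maximum-degree) vertices of $H$. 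Since that is an isomorphism-invariant set, links of $G$-vertices and links of $U$-vertices have different orders and can never be isomorphic. Thus a labeling of $H$ is link-irregular if and only if it separates the $G$-vertices from one another \emph{and} separates the $U$-vertices from one another.

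For the upper bound I would set $r=\eta(G)$, pick an optimal link-irregular labeling $\ell_G$ of $G$ and an optimal link-irregular labeling $\ell_K$ of $K_n$ (the latter exists within budget because $\eta(K_n)\le r<\infty$), color $E(G)$ by $\ell_G$, color $E(K_n)$ by $\ell_K$, and assign every join edge one fixed color. Separating the $U$-vertices is then clean: a labeled isomorphism $H-w\cong H-w'$ must carry $U\setminus\{w\}$ onto $U\setminus\{w'\}$, since these are the universal vertices of the respective links, and therefore restricts to a labeled isomorphism $K_n-w\cong K_n-w'$, contradicting the link-irregularity of $\ell_K$. Separating the $G$-vertices is the delicate step: from a labeled isomorphism $L(x)\lor K_n\cong L(y)\lor K_n$ I want to cancel the common clique and conclude that the $G$-links of $x$ and $y$ (with their $\ell_G$-labels) are isomorphic, contradicting the link-irregularity of $\ell_G$.

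For the lower bound I would show that any link-irregular labeling $\ell$ of $H$ yields a link-irregular labeling of $G$ on no more colors; taking $\ell$ to realize $\eta(H)$ then gives $\eta(G)\le\eta(H)$, and together with the construction this yields equality. The natural candidate is the restriction of $\ell$ to $E(G)$, and by the structural reduction the distinctness of the links $L(x)\lor K_n$ over $x\in V(G)$ ought to descend to distinctness of the $G$-links once the common clique $U$ is removed. The genuine danger here is that the \emph{join} edges, not the $G$-edges, might be what distinguishes two $G$-vertices, so one must rule out that $H$ becomes link-irregular with fewer than $\eta(G)$ colors by exploiting the join.

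The main obstacle, present in both directions, is precisely this \emph{labeled join-cancellation}: showing $A\lor K_n\cong B\lor K_n$ forces $A\cong B$, and that join edges cannot cheaply distinguish $G$-vertices. The subtlety is that a universal vertex of the $G$-link $A=L(x)$ acquires, inside $A\lor K_n$, the same degree $\deg_G(x)+n-1$ as a genuine $U$-vertex and also becomes universal in the link, so the universal set of $A\lor K_n$ is $U$ together with the universal vertices of $A$, and an isomorphism need not send $U$ to $U$ a priori. I would try to break this by reading off $U$ from the color pattern (the join edges share one color that, within the universal clique, cuts $U$ off from the universal vertices of $A$ as one side of a complete bipartite pattern), and by removing the residual ambiguity, when the number of universal vertices of $A$ equals $n$, via the rigidity of $\ell_K$ (a link-irregular labeling of $K_n$ has trivial automorphism group, since $K_n-w\cong K_n-w'$ forces $w=w'$). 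Making this separation watertight, and confirming that it survives the color reuse forced by the tight budget $r=\eta(G)$, is where essentially all the work lies; the degree bound from $\Delta(G)<|G|-1$ and the inequality $\eta(K_n)\le\eta(G)$ are exactly what make the two endpoints of the argument go through.
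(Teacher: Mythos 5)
Your construction for the upper bound is exactly the paper's: label $G$ by an optimal link-irregular labeling $\ell_G$, label the clique by a link-irregular labeling $\ell_K$ of $K_n$ within the same budget (available since $\eta(K_n)\le\eta(G)<\infty$), and give every join edge one fixed label. Your verifications that clique vertices are separated from one another (a labeled isomorphism $H-w\cong H-w'$ must carry $U\setminus\{w\}$ onto $U\setminus\{w'\}$, since $\Delta(G)<|G|-1$ makes these exactly the universal vertices of the respective links, and then $\ell_K$ finishes) and that $G$-vertices are never confused with clique vertices (their links have orders $d_G(x)+n<|G|+n-1$ versus $|G|+n-1$) are correct, and the second is cleaner than the paper's degree count (which as printed asserts the link of $u\in V(G)$ has $n$ points of degree $|G|+n-2$; those clique vertices actually have degree $d_G(u)+n-1$ in that link, which is strictly smaller under the hypothesis). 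But the proposal stops short of a proof at precisely the two places you yourself flag. First, separating two $G$-vertices requires showing that a labeled isomorphism $L_{\ell_G}(x)\lor K_n\cong L_{\ell_G}(y)\lor K_n$, with the join colour reused inside $G$ and $K_n$, forces $L_{\ell_G}(x)\cong L_{\ell_G}(y)$; you correctly identify the obstruction (a universal vertex of $L_G(x)$ becomes universal in $L(x)$ and degree-indistinguishable from a clique vertex, so the isomorphism need not fix $U$ setwise) and sketch two ideas for overcoming it, but carry out neither, and neither is automatic once colours are reused. Second, the lower bound $\eta(G\lor K_n)\ge\eta(G)$ is left open: the restriction of an optimal labeling of $H$ to $E(G)$ need not be link-irregular because, as you note, the join edges between $N_G(x)$ and $U$ lie inside $L(x)$, vary with $x$, and could carry all the distinguishing information. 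Naming this danger does not rule it out.

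For what it is worth, the paper's own proof also skips both of these steps --- it declares the lower bound ``clear'' and asserts without argument that no two $G$-vertices share a labeled link --- so you have accurately isolated the parts of the published argument that are not actually justified. That is a useful diagnosis, but as submitted the proposal is a map of the proof rather than the proof: to complete it you must supply the labeled join-cancellation argument and an actual derivation of $\eta(G\lor K_n)\ge\eta(G)$, not just identify where they are needed.
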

	
	\begin{proof}
		It is clear that $\eta(G \lor K_n) \geq \eta(G)$, so we only need to show that $\eta(G \lor K_n) \leq \eta(G)$. We do this by providing a link-irregular $\eta(G)$-labeling of $G \lor K_n$. Suppose we have a link-irregular labeling of $G$ using labels from $[\eta(G)]$ and using the same labeling-set, we may choose a link-irregular labeling for $K_n$ too. We define the labeling $l$ of $G \lor K_n$ as follows:\\
		Label $G$ and $K_n$ according to the labelings we have chosen for them.\\
		Label the edges between $G$ and $K_n$ all with the same label, say $1$.\\
		We claim this is a link-irregular labeling. Clearly no two vertices in $G$ have the same labeled link, and no two vertices in $K_n$ have the same link. Suppose $u \in V(G)$ and $v \in V(K_n)$. Then the link of $u$ has $n$ points of degree $|G|+n-2$ while the link of $v$ only has $n-1$ points of degree $|G|+n-2$ hence the links cannot be isomorphic.
	\end{proof}
	
	\begin{prop} \label{prop:reducing}
		Let $G$ be a graph such that $\eta(G) < \infty$, and let $H$ be the subgraph of $G$ induced by all vertices of degree $|G|-1$. If $3 \leq \eta(G-H) < \infty$ then $\eta(G) = \eta(G-H)$.
	\end{prop}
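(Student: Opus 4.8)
The plan is to recognize that $H$ is forced to be a clique and that $G$ is a join. Since every vertex of degree $|G|-1$ is adjacent to all other vertices, any two such vertices are mutually adjacent, so $H \cong K_m$ with $m = |V(H)|$; moreover each vertex of $H$ is adjacent to every vertex of $G-H$, whence $G = (G-H) \lor K_m$. First I would check that $G-H$ has no universal vertex: if $w \in V(G-H)$ were adjacent to all of $V(G-H)\setminus\{w\}$, then, being also adjacent to all $m$ vertices of $H$, it would have degree $|G|-1$ in $G$ and thus belong to $H$, a contradiction. Hence $\Delta(G-H) < |G-H|-1$, which is precisely the hypothesis required to feed $G-H$ into Proposition~\ref{prop:expanding}.

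Next I would split on the value of $m$. For $m \neq 2$ the result is immediate from Proposition~\ref{prop:expanding} applied with base graph $G-H$ and $n = m$: the theorem computing $\eta(K_n)$ gives $\eta(K_m) \leq 3$ for every $m \notin \{2\}$ (indeed $\eta(K_0)=\eta(K_1)=0$, $\eta(K_3)=\eta(K_4)=\eta(K_5)=3$, and $\eta(K_m)=2$ for $m \geq 6$), so the standing hypothesis $\eta(G-H) \geq 3$ yields $\eta(K_m) \leq \eta(G-H) < \infty$. Proposition~\ref{prop:expanding} then gives $\eta(G) = \eta\big((G-H)\lor K_m\big) = \eta(G-H)$. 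The trivial case $m=0$, where $H$ is empty and $G-H=G$, is subsumed here.

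The remaining and genuinely delicate case is $m = 2$, where $H \cong K_2$ and $\eta(K_2)=\infty$, so Proposition~\ref{prop:expanding} cannot be used as a black box. The lower bound $\eta(G) \geq \eta(G-H)$ still follows by the same reasoning as the lower bound in Proposition~\ref{prop:expanding}, which uses only $\Delta(G-H) < |G-H|-1$ and not the value of $\eta(K_2)$. For the upper bound I would construct a link-irregular labeling of $G$ using only $\eta(G-H)$ labels. Starting from a link-irregular labeling $\psi$ of $G-H$ on the set $[\eta(G-H)]$, a key observation is that $\psi$ forces the labeled graph $(G-H,\psi)$ to have trivial label-preserving automorphism group: any such automorphism sends each vertex to one with an isomorphic labeled link, and link-irregularity makes all links distinct, so it must fix every vertex. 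Writing $h_1,h_2$ for the two universal vertices, I would label $E(G-H)$ by $\psi$ and label all cone edges $h_jw$ together with $h_1h_2$ uniformly, except for a single perturbation at one vertex $w_0$ that makes the two label-vectors $(l(h_1 w))_w$ and $(l(h_2 w))_w$ differ. Since each apex is the unique universal vertex of its own link (because $G-H$ has none), the triviality of the automorphism group then guarantees $L_l(h_1) \ncong L_l(h_2)$, while the vertex-count gap $|L_l(h_j)| = |G-H|+1 > |L_l(w)|$ separates the two universal vertices from all vertices of $G-H$ automatically.

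The main obstacle I anticipate is verifying, in this $m=2$ construction, that the links of the vertices of $G-H$ remain pairwise non-isomorphic once the two apexes are coned onto their neighborhoods. Because only $\eta(G-H)$ labels are available, one cannot simply mark the apexes with a fresh label, so the hard part will be arguing that in each link $L_l(w)$ the apex pair $\{h_1,h_2\}$ is canonically recoverable (for instance as the unique pair realizing the appropriate universal/degree pattern, with care when $G-H[N(w)]$ itself contains universal vertices), so that deleting it returns $L_\psi(w)$ and the distinctness of the $L_l(w)$ is inherited from the link-irregularity of $\psi$. Keeping the perturbation localized to $N(w_0)$ and checking that it does not accidentally identify two of these links is the crux of the argument.
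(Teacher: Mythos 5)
Your main line of argument is exactly the paper's: observe that $H$ induces a clique of universal vertices, so $G \cong (G-H)\lor K_{|H|}$ with $\Delta(G-H)<|G-H|-1$, and then invoke Proposition~\ref{prop:expanding} with $n=|H|$, using $\eta(K_{|H|})\le 3\le\eta(G-H)$ to satisfy its hypothesis. For $|H|\ne 2$ your write-up is correct and, if anything, more careful than the paper's, which applies Proposition~\ref{prop:expanding} without checking the hypothesis $\eta(K_{|H|})\le\eta(G-H)$ at all. You have also correctly identified something the paper silently skips: when $|H|=2$ we have $\eta(K_2)=\infty$, the hypothesis of Proposition~\ref{prop:expanding} fails, and its proof genuinely breaks down (the two apex vertices have links $(G-H)\lor K_1$ that a constant labeling of the cross edges cannot distinguish). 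The paper's proof therefore has a gap at $|H|=2$ that your proposal is the only one to even acknowledge.

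That said, your treatment of the $|H|=2$ case is not a proof; it is a plan with the hardest step left open, and you say so yourself. The separation $L_l(h_1)\ncong L_l(h_2)$ via the perturbed edge $h_1w_0$ and the trivial label-preserving automorphism group of $(G-H,\psi)$ is sound, as is the vertex-count separation of the apexes from the base vertices. But the claim that $L_l(w)\ncong L_l(w')$ for $w,w'\in V(G-H)$ still requires showing that any isomorphism $L_l(w)\to L_l(w')$ carries $\{h_1,h_2\}$ to $\{h_1,h_2\}$, and this can fail on degrees alone whenever $L_{G-H}(w')$ has its own universal vertices; whether the labels rescue you depends on $\psi$ and on how the perturbation at $w_0$ interacts with the links of the neighbors of $w_0$. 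Until that step is closed (by a canonical identification of the apex pair, or by a smarter assignment of the cross-edge labels using the $\ge 3$ available colors), the $|H|=2$ case remains unproved in your proposal -- and unaddressed in the paper. Either complete that construction or restate the proposition to exclude $|H|=2$; as it stands, neither your argument nor the paper's covers it.
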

	
	\begin{proof} 
		In this proof, we make use of Proposition~\ref{prop:expanding}. If $H$ is empty, the result is trivial, so assume $|H|>0$. Since $H$ contains all vertices of degree $|G|-1$, $G-H$ has no vertices of degree $|G-H|-1$. By Proposition~\ref{prop:expanding}, $\eta((G-H)\lor K_{|H|}) = \eta(G-H)$, but $(G-H)\lor K_{|H|} \cong G$. Hence $\eta(G) = \eta(G-H)$.
	\end{proof}
	
	\begin{prop}
		Suppose $\eta(K_{n}) \leq \eta(G) < \infty$ for some value $n$, and $3 \leq \eta(G-H) < \infty$ where $H$ is the subgraph induced by the vertices of degree $|G|-1$ in $G$. Then $\eta(G \lor K_n) = \eta(G)$. 
	\end{prop}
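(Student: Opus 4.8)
The plan is to peel off the universal vertices of $G$, recognise $G\lor K_n$ as a single join of a ``universal-vertex-free'' graph with a clique, and then invoke the two preceding propositions. Write $h=|H|$. Every vertex of $H$ has degree $|G|-1$, so these vertices are pairwise adjacent and adjacent to all of $G-H$; hence $H\cong K_h$ and $G\cong (G-H)\lor K_h$. The same observation shows $G-H$ has no vertex of degree $|G-H|-1$, i.e.\ $\Delta(G-H)<|G-H|-1$. Finally, by Proposition~\ref{prop:reducing} (its hypotheses $\eta(G)<\infty$ and $3\le\eta(G-H)<\infty$ being exactly what we assume), $\eta(G)=\eta(G-H)$, so in particular $\eta(G-H)\ge 3$.

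Next I would use associativity of the join to write
\[
G\lor K_n\;\cong\;\big((G-H)\lor K_h\big)\lor K_n\;\cong\;(G-H)\lor K_{h+n},
\]
which reduces the problem to evaluating $\eta\big((G-H)\lor K_{h+n}\big)$. Here Proposition~\ref{prop:expanding} is the natural tool: its degree hypothesis $\Delta(G-H)<|G-H|-1$ holds, and its label hypothesis $\eta(K_{h+n})\le\eta(G-H)$ holds as soon as $h+n\ge 3$, because then $\eta(K_{h+n})\in\{2,3\}$ while $\eta(G-H)\ge 3$. In that regime Proposition~\ref{prop:expanding} gives $\eta\big((G-H)\lor K_{h+n}\big)=\eta(G-H)=\eta(G)$, which is the claim. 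The case $h=0$ is the same statement applied directly to $G$ and $K_n$, now using the hypothesis $\eta(K_n)\le\eta(G)$.

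The main obstacle is the single boundary case $h+n=2$, that is $h=n=1$, in which $G\lor K_n\cong (G-H)\lor K_2$ and Proposition~\ref{prop:expanding} is unavailable because $\eta(K_2)=\infty$, so its hypothesis $\eta(K_{h+n})\le\eta(G-H)$ fails. The underlying difficulty is genuine: the two apex vertices $a,b$ of $(G-H)\lor K_2$ are true twins, and the uniform ``label every cross-edge $1$'' scheme used in Proposition~\ref{prop:expanding} would force $L_l(a)\cong L_l(b)$. To get around this I would not build a labeling by hand but instead verify finiteness through Theorem~\ref{thm1} and then quote Proposition~\ref{prop:reducing} a second time, now applied to $X:=(G-H)\lor K_2$ itself. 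For finiteness, the only delicate pair is $\{a,b\}$, and there $E(L(a))\ne E(L(b))$ because $E(L(a))$ contains the edges joining $b$ to $G-H$ while $E(L(b))$ contains those joining $a$ to $G-H$; every other pair inherits the Theorem~\ref{thm1} dichotomy from the link-irregular labelability of $G-H$ (an edge-set difference in $G-H$ persists in $X$ since $E(L_X(u))\cap E(G-H)=E(L_{G-H}(u))$, a non-isomorphism of links persists after the common join with $K_2$, and a vertex of $G-H$ is separated from an apex by a vertex count because $\Delta(G-H)<|G-H|-1$). Thus $\eta(X)<\infty$. The universal vertices of $X$ are exactly $\{a,b\}$, so the subgraph they induce is $K_2$ and $X-\{a,b\}=G-H$ with $3\le\eta(G-H)<\infty$; Proposition~\ref{prop:reducing} then yields $\eta(X)=\eta(G-H)=\eta(G)$, completing this case and hence the proof.
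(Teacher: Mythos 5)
Your proposal follows the same route as the paper: write $G \cong (G-H)\lor K_h$, use associativity of the join to get $G\lor K_n \cong (G-H)\lor K_{n+h}$, and then combine Propositions~\ref{prop:expanding} and~\ref{prop:reducing}. What you add is the correct observation that this chain breaks in the boundary case $n=h=1$, where $G\lor K_n\cong (G-H)\lor K_2$ and Proposition~\ref{prop:expanding} cannot be invoked because $\eta(K_2)=\infty$; the paper applies Proposition~\ref{prop:expanding} to $(G-H)\lor K_{n+|H|}$ without comment, and this case is genuinely reachable under the stated hypotheses (take $n=1$ and a graph with exactly one universal vertex, e.g.\ $G=2K_3\lor K_1$). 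Your Theorem~\ref{thm1} verification that $(G-H)\lor K_2$ still admits some link-irregular labeling is sound. The one weak point is how you close the case: you cite Proposition~\ref{prop:reducing} with $|H|=2$, but the paper's proof of Proposition~\ref{prop:reducing} is itself just an application of Proposition~\ref{prop:expanding} to $(G-H)\lor K_{|H|}$, so for $|H|=2$ it needs $\eta(K_2)\le\eta(G-H)$ and contains exactly the hole you are trying to route around. A self-contained fix is easy and worth stating: label $G-H$ link-irregularly with $\eta(G-H)\ge 3$ labels, then give every cross-edge from one apex the label $1$ and every cross-edge from the other apex the label $2$. Since $\Delta(G-H)<|G-H|-1$, each apex's labeled link has a unique universal vertex (namely the other apex), and its incident labels are all $1$ in one link and all $2$ in the other, so the two apexes are distinguished; all remaining pairs are handled as in Proposition~\ref{prop:expanding}. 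With that substitution your argument is complete and strictly more careful than the paper's own proof.
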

	
	\begin{proof}
		If $G$ has no vertex of degree $|G|-1$, this is just Proposition~\ref{prop:expanding}. If $G$ does have a vertex of degree $|G|-1$, then we let $H$ be as in the proof of Proposition~\ref{prop:reducing}. We have $G\lor K_n \cong (G-H)\lor K_{n+|H|}$, so using both Proposition~\ref{prop:expanding} and Proposition~\ref{prop:reducing}, we obtain $\eta(G\lor K_n) = \eta((G-H) \lor K_{n+|H|}) = \eta(G-H) = \eta(G)$.
	\end{proof}

	\section*{Acknowledgments}
	This research was supported by the UExplore Undergraduate Research Program at the University of Evansville.

\end{document}